\newcommand{\w}{\omega}
\newcommand{\U}{\mathcal U}
\newcommand{\D}{\mathcal D}
\newcommand{\IR}{\mathbb R}
\newcommand{\IQ}{\mathbb Q}
\newcommand{\IN}{\mathbb N}
\newcommand{\e}{\varepsilon}
\newcommand{\K}{\mathcal K}
\newcommand{\C}{\mathcal C}
\newcommand{\F}{\mathcal F}
\newcommand{\cs}{\mathsf{cs}}
\newcommand{\ap}{\mathsf{ap}}
\newcommand{\as}{\mathsf{as}}
\newcommand{\Ra}{\Rightarrow}
\newcommand{\Cld}{\mathsf{Cld}}
\newcommand{\dom}{\mathrm{dom}}
\newtheorem{theorem}{Theorem}[section]
\newtheorem{example}[theorem]{Example}
\newtheorem{proposition}[theorem]{Proposition}
\newtheorem{problem}[theorem]{Problem}
\newtheorem{corollary}[theorem]{Corollary}
\newtheorem{lemma}[theorem]{Lemma}
\title{The cometrizability of generalized metric spaces}
\author{Taras Banakh and Yaryna Stelmakh}
\subjclass{Primary 54E20; Secondary 54C35; 54D50; 54D55; 54E18; 54E35}
\keywords{$\aleph_0$-space, cosmic space,  stratifiable space, cometrizable space}
\address{T.Banakh: Ivan Franko National University of Lviv (Ukraine) and Institute of Mathematics, Jan Kochanowski University in Kielce (Poland)}
\email{t.o.banakh@gmail.com}
\address{Ya. Stelmakh:  Ivan Franko National University of Lviv, Ukraine}
\email{yarynziya@ukr.net}
\begin{document}
\begin{abstract}
A topological space $X$ is cometrizable if it admits a weaker metrizable topology such that each point $x\in X$ has a (not necessarily open) neighborhood base consisting of metrically closed sets. We study the relation of cometrizable spaces to other generalized metric spaces and prove that all  $\mathsf{as}$-cosmic spaces are cometrizable. Also, we present an example of a regular countable space of weight $\omega_1$, which is not cometrizable. Under $\w_1=\mathfrak c$ this space contains no infinite compact subsets and hence is $\cs$-cosmic. Under $\w_1<\mathfrak p$ this countable space is Fr\'echet-Urysohn and is not $\cs$-cosmic.
\end{abstract}
\maketitle

\section{Introduction}
{\parskip1pt

In this paper we study the interplay between the class of cometrizable spaces and other classes of generalized metric spaces. 

A topological space $X$ is called {\em cometrizable} if it admits a metrizable topology such that each point $x\in X$ has a (not necessarily open) neighborhood base consisting of metrically closed sets. Equivalently, cometrizable spaces can be defined as spaces $X$ for which there exists a bijective continuous map $f:X\to M$ to a metrizable space $M$ such that for any open set $U\subseteq X$ and point $x\in U$ there exists a neighborhood $V\subseteq X$ of $x$ such that $f^{-1}(\overline{f(V)})\subseteq U$. 

It is clear that each cometrizable space $X$ is regular and {\em submetrizable}, i.e. admits a continuous bijective map onto a metrizable space.

Cometrizable spaces were introduced by Gruenhage \cite{Grue89} who proved the following  interesting implication of PFA, the Proper Forcing Axiom \cite{Baum}.

\begin{theorem}[Gruenhage] Under PFA, a cometrizable space $X$ is cosmic if and only if $X$ contains no uncountable  discrete subspaces and no uncountable subspaces of the Sorgenfrey line.
\end{theorem}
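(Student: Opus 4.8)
The forward implication holds in ZFC. Since having a countable network is a hereditary property and implies hereditary Lindel\"ofness, it is enough to note two facts. First, an uncountable discrete space is not Lindel\"of, so a cosmic space can contain no uncountable discrete subspace. Second, no uncountable subspace $A$ of the Sorgenfrey line is cosmic: given any countable family $\mathcal N$ of subsets of $A$, for each $a\in A$ pick $N_a\in\mathcal N$ with $a\in N_a\subseteq[a,a+1)\cap A$; then $a=\min N_a$, so $a\mapsto N_a$ is injective and $\mathcal N$ cannot be a network. As cosmicity is hereditary, a cosmic space contains no uncountable subspace of the Sorgenfrey line. Neither half of this direction needs PFA.

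For the substantial implication I would assume PFA and argue contrapositively: a cometrizable space $X$ that is not cosmic must contain an uncountable discrete subspace or an uncountable subspace of the Sorgenfrey line. Fix a metric $\rho$ whose topology $\tau_\rho$ is coarser than the topology $\tau$ of $X$ and witnesses cometrizability, so that each point has a $\tau$-neighbourhood base of $\rho$-closed sets. A first reduction disposes of non-separable metrics: if $\rho$ is non-separable then for some $n$ there is an uncountable $\frac{1}{n}$-separated set, which is $\rho$-discrete, hence $\tau$-discrete, and we are done. So assume $\rho$ is separable; then $M=(X,\tau_\rho)$ is second countable and OCA, a consequence of PFA, is available on subspaces of $M$. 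Here one uses that, for cometrizable spaces, cosmicity is equivalent to the existence of a countable network of $\rho$-closed sets (replace a given network by the $\rho$-closures of its members). Hence non-cosmicity yields an uncountable set $\{x_\alpha:\alpha<\omega_1\}$ with $\rho$-closed $\tau$-neighbourhoods $U_\alpha\ni x_\alpha$ whose membership relation $x_\beta\in U_\alpha$ cannot be trivialised on a countable decomposition.

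The core is an application of the Open Colouring Axiom. To make the colouring continuous I would code each point as the pair $(x_\alpha,U_\alpha)$ in $M\times\mathcal F$, where $\mathcal F$ is a second-countable space of $\rho$-closed sets (a hyperspace), on which $\{(x,C):x\notin C\}$ is open. Colour a pair by $K_0$ when the two neighbourhoods avoid each other's centres, i.e.\ $x_\beta\notin U_\alpha$ and $x_\alpha\notin U_\beta$; cometrizability, through the $\rho$-closedness of the $U_\alpha$, is exactly what makes $K_0$ open. OCA then gives a dichotomy: either an uncountable $K_0$-homogeneous set, which by construction is discrete in $X$, yielding the first alternative; or a cover of $\{x_\alpha\}$ by countably many $K_1$-homogeneous sets. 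In the latter case, if every piece were cosmic then so would be $X$, against our hypothesis; hence some piece retains an uncountable, essentially one-sided containment pattern, and reading off a real coordinate from the separable metric converts it into an uncountable subspace of the Sorgenfrey line.

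The main obstacle is the precise engineering of this colouring so that both horns are exactly what we want. The delicate points are: coding $\alpha\mapsto U_\alpha$ through the hyperspace so that $K_0$ is genuinely open and $K_0$-homogeneity forces $\tau$-discreteness; and, crucially, arranging the complementary alternative so that, after discarding cosmic pieces, it produces the linear, one-sided neighbourhood behaviour characteristic of the Sorgenfrey line rather than a mere comparability relation. Securing this last point may require refining the partition using the metric linear order, or a second application of OCA, and it is here, in showing that non-cosmicity cannot survive a countable decomposition without forcing a Sorgenfrey copy, that PFA does the real work.
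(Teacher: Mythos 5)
First, a contextual remark: the paper does not prove this theorem at all; it is quoted as motivation from Gruenhage \cite{Grue89}, with the OCA strengthening attributed to Todor\v cevi\'c \cite[8.5]{Tod}. So there is no in-paper proof to compare against, and your attempt must be judged as a proof of Gruenhage's theorem itself. Your forward direction is correct and indeed needs no PFA: cosmicity is hereditary and implies hereditary Lindel\"ofness, which kills uncountable discrete subspaces, and your minimum-point argument (for $a\in N_a\subseteq[a,a+1)\cap A$ one has $a=\min N_a$, so $a\mapsto N_a$ is injective) correctly shows an uncountable Sorgenfrey subspace has no countable network. The reduction to a separable witness metric is also fine, since an uncountable $\frac1n$-separated set is discrete in the metric topology and hence in the finer topology of $X$.

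The hard direction, however, is not a proof but a plan, and the gap you yourself flag is exactly where the entire content of the theorem lives. Two concrete failures: (i) your selection of the points $x_\alpha$ and $\rho$-closed neighborhoods $U_\alpha$ from non-cosmicity is never made precise --- ``cannot be trivialised on a countable decomposition'' is not a property you can hand to OCA, and without a precise choice the step ``if every $K_1$-homogeneous piece were cosmic then so would be $X$'' is a non sequitur, since the pieces cover only the set $\{x_\alpha:\alpha<\w_1\}$, not $X$, and cosmicity of that subspace says nothing about $X$ unless the $x_\alpha$'s were chosen to encode the failure of a countable network globally; (ii) the extraction of a Sorgenfrey copy from a $K_1$-homogeneous set (where the relation is merely the symmetric disjunction $x_\beta\in U_\alpha$ or $x_\alpha\in U_\beta$) is precisely the delicate combinatorial core of Gruenhage's and Todor\v cevi\'c's arguments, involving a careful second analysis (in Gruenhage's case a proper forcing plus an absoluteness argument, in Todor\v cevi\'c's case further uses of OCA on refined colorings) to produce the one-sided, order-linked neighborhood structure. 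Acknowledging that this ``may require refining the partition \dots or a second application of OCA'' is an honest admission that the theorem has not been proved; as it stands, both horns of your dichotomy terminate in unverified claims.
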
 

In \cite[8.5]{Tod} Todor\v cevi\'c proved that this PFA-characterization of cosmic cometrizable  spaces remains true under OCA (the Open Coloring Axiom, which follows from PFA).

These results of Gruenhage and Todor\v cevi\'c motivate a deeper study of cometrizable spaces. In this paper we establish some inheritance properties of the class of cometrizable spaces and using the obtained information study the relation of cometrizable space to some known classes of generalized metric spaces.

We start with the following simple (but important) observation.

\begin{proposition}\label{p:sub} The class of cometrizable spaces contains all metrizable spaces and is closed under taking subspaces and countable Tychonoff products.
\end{proposition}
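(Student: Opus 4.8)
The plan is to verify all three closure properties using the equivalent characterization of cometrizability stated in the introduction: a space $X$ is cometrizable iff there is a continuous bijection $f:X\to M$ onto a metrizable space $M$ such that for every open $U\subseteq X$ and every point $x\in U$ there is a neighborhood $V$ of $x$ with $f^{-1}(\overline{f(V)})\subseteq U$, the closure being taken in $M$. For a metrizable space $X$ with metric $d$, I would take $f$ to be the identity map $X\to X$: given $x\in U$ with $U$ open, choose $\e>0$ with the closed ball $\overline{B}(x,\e)\subseteq U$ and set $V=B(x,\e)$. Then $\overline{f(V)}=\overline{V}\subseteq\overline{B}(x,\e)\subseteq U$, so the condition holds and every metrizable space is cometrizable.

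For subspaces, let $X$ be cometrizable via $f:X\to M$ and let $Y\subseteq X$. I would use the restriction $f|_Y:Y\to f(Y)$, which is a continuous bijection onto the subspace $f(Y)$ of the metrizable space $M$, hence onto a metrizable space. Given an open set $U'=U\cap Y$ in $Y$ (with $U$ open in $X$) and a point $x\in U'$, I apply cometrizability of $X$ to obtain a neighborhood $V$ of $x$ in $X$ with $f^{-1}(\overline{f(V)})\subseteq U$, and put $V'=V\cap Y$. The key observation is that the closure of $f(V')$ inside $f(Y)$ equals $\overline{f(V')}\cap f(Y)\subseteq\overline{f(V)}\cap f(Y)$, so any $y\in Y$ whose image $f(y)$ lies in this relative closure satisfies $y\in f^{-1}(\overline{f(V)})\cap Y\subseteq U\cap Y=U'$. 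This verifies the condition for $Y$.

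For countable products, let $(X_n)_{n\in\w}$ be cometrizable via continuous bijections $f_n:X_n\to M_n$ onto metrizable spaces, and set $X=\prod_{n\in\w}X_n$, $M=\prod_{n\in\w}M_n$, and $f=\prod_{n\in\w}f_n$. Here $M$ is metrizable as a countable product of metrizable spaces, and $f$ is a continuous bijection. Given $x=(x_n)\in X$ and a basic open neighborhood $U=\prod_{n\in\w}U_n$ with $U_n=X_n$ for all $n$ outside a finite set $F$, I would choose for each $n\in F$ a neighborhood $V_n$ of $x_n$ with $f_n^{-1}(\overline{f_n(V_n)})\subseteq U_n$, set $V_n=X_n$ for $n\notin F$, and let $V=\prod_{n\in\w}V_n$, which is a basic open neighborhood of $x$. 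Using that the closure of a product is the product of the closures in the product topology, $\overline{f(V)}=\prod_{n\in\w}\overline{f_n(V_n)}$, whence $f^{-1}(\overline{f(V)})=\prod_{n\in\w}f_n^{-1}(\overline{f_n(V_n)})\subseteq\prod_{n\in\w}U_n=U$, the factors outside $F$ being all of $X_n=U_n$.

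None of the three verifications is genuinely difficult; the only points requiring care are the two standard topological facts invoked — that the relative closure in a subspace is the trace of the ambient closure, and that closure commutes with products in the product topology — together with the elementary fact that a countable product of metrizable spaces is metrizable. The product case is where a reader should slow down, since it is essential that only finitely many coordinates of the basic neighborhood $U$ are proper: this is exactly what lets the corresponding $V$ be open and what makes the remaining factors collapse harmlessly to $X_n=M_n$.
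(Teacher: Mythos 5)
Your proof is correct; the paper states Proposition~\ref{p:sub} as a ``simple (but important) observation'' with no proof at all, and your verification via the continuous-bijection characterization (identity map for metrizable spaces, restriction for subspaces, product map with the facts that relative closure is the trace of the ambient closure and that closure commutes with products) is exactly the intended routine argument. One tiny slip worth noting: in the product case the set $V=\prod_{n\in\w}V_n$ need not be \emph{open} (the $V_n$ for $n\in F$ are only neighborhoods), but it is still a neighborhood of $x$, which is all the definition requires.
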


Next we show that the class of cometrizable spaces is stable under forming  function spaces $C_\K(X,Y)$. 

Here for topological spaces $X,Y$ by $C(X,Y)$ we denote the set of all continuous functions from $X$ to $Y$. Given a family $\mathcal K$ of compact subsets of the space $X$ by $C_\K(X,Y)$ we denote the space $C(X,Y)$ endowed with the topology generated by the subbase consisting of the sets 
$$[K,U]:=\{f\in C(X,Y):f(K)\subseteq U\}$$where $K\in\K$ and $U$ is an open set in $Y$.

If $\K$ is the family of all compact (finite) subsets of $X$, then the function space $C_\K(X,Y)$ will be denoted by $C_{k}(X,Y)$ (resp. $C_p(X,Y)$). If $\K$ is the family of convergent sequences in $X$ (i.e., countable compact sets with a unique non-isolated point), then the function space $C_\K(X,Y)$ will be denoted by $C_{\cs}(X,Y)$. The function spaces  $C_k(X,\IR)$, $C_\cs(X,\IR)$ and $C_p(X,\IR)$ are denoted by $C_{k}(X)$, $C_\cs(X)$ and $C_p(X)$, respectively. 

A family $\K$ of compact subsets of a topological space $X$ is called {\em separable} if 
\begin{itemize}\itemsep1pt
\item each compact subset of any compact set $K\in\K$ belongs to the family $\K$;
\item the union $\bigcup\K$ is dense in $X$;
\item $\K$ contains a countable subfamily $\C$ such that each compact set $K\in\K$ can be enlarged to a compact set $\tilde K\in\K$ such that $\tilde K\cap\bigcup\C$ is dense in $\tilde K$.
\end{itemize}

\begin{theorem}\label{t:Ck} Let $X$ be a topological space and $\K$ be a separable family of compact subsets of $X$. Then for any cometrizable space $Y$ the function space $C_\K(X,Y)$ is cometrizable.
\end{theorem}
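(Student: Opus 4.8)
The plan is to build the cometrization of $C_\K(X,Y)$ explicitly from a cometrization $f\colon Y\to M$ of $Y$, where $M$ is metrizable with a metric $d$ that we may assume bounded by $1$. Fix a countable subfamily $\C=\{C_n:n\in\w\}\subseteq\K$ witnessing the separability of $\K$. Each $C_n$ is compact, so $C(C_n,M)$ carries the sup-metric $d_n(\varphi,\psi)=\sup_{x\in C_n}d(\varphi(x),\psi(x))$, whose topology is the compact-open topology, and the countable product $P=\prod_{n\in\w}C(C_n,M)$ is metrizable. I would define $\Phi\colon C(X,Y)\to P$ by $\Phi(g)=(f\circ g|_{C_n})_{n\in\w}$ and show that $\Phi$ is an injective continuous map of $C_\K(X,Y)$ onto a metrizable subspace $N=\Phi(C(X,Y))$; then $N$ (with the subspace metric of $P$) is the metrizable space witnessing cometrizability, and what remains is the closed-neighborhood-base condition $\Phi^{-1}(\overline{\Phi(V)})\subseteq U$.

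The two preliminary steps are routine. For injectivity, $\Phi(g)=\Phi(g')$ forces $f\circ g$, hence (as $f$ is injective) $g$, to agree with $g'$ on $\bigcup\C$; given $K\in\K$, enlarging $K$ to $\tilde K\in\K$ with $\tilde K\cap\bigcup\C$ dense in $\tilde K$ and using that $g,g'$ are continuous into the Hausdorff space $Y$ gives $g=g'$ on $\tilde K\supseteq K$, hence on the dense set $\bigcup\K$, hence on $X$. For continuity it suffices to treat each coordinate $g\mapsto f\circ g|_{C_n}$ into the compact-open topology: the preimage of a subbasic set $[A,W]$ (with $A\subseteq C_n$ compact, $W\subseteq M$ open) is $[A,f^{-1}(W)]$, which is subbasic in $C_\K(X,Y)$ since $A\in\K$ (a compact subset of $C_n\in\K$) and $f^{-1}(W)$ is open in $Y$. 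Thus the $\Phi$-topology is weaker than the $\K$-open topology.

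The heart of the matter is the neighborhood condition, and it suffices to check it for a subbasic $U=[K,W]$ (intersect the witnesses for a basic neighborhood, and pass to a basic neighborhood inside a general open set). Fix $g_0$ with $g_0(K)\subseteq W$. Applying cometrizability of $Y$ at each point of the compact set $g_0(K)\subseteq W$ and taking a finite subcover, I would produce an open $W'$ with $g_0(K)\subseteq W'$ and $f^{-1}(\overline{f(W')})\subseteq W$; set $\tilde W=f^{-1}(\overline{f(W')})$, a closed set with $W'\subseteq\tilde W\subseteq W$. Next enlarge $K$ to $\tilde K\in\K$ with $D:=\tilde K\cap\bigcup\C$ dense in $\tilde K$. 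The set $A:=\tilde K\cap g_0^{-1}(W')$ is relatively open in $\tilde K$ and contains $K$, so by normality of the compact Hausdorff space $\tilde K$ I choose a relatively open $A'$ with $K\subseteq A'\subseteq L:=\overline{A'}^{\tilde K}\subseteq A$. Since $L$ is a compact subset of $\tilde K$ we have $L\in\K$, and $g_0(L)\subseteq g_0(A)\subseteq W'$, so $V:=[L,W']$ is a $\K$-open neighborhood of $g_0$.

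Finally I would verify $\Phi^{-1}(\overline{\Phi(V)})\subseteq[K,W]$. Suppose $\Phi(g)\in\overline{\Phi(V)}$ but $g(x_0)\notin W$ for some $x_0\in K$; then $f(g(x_0))\notin\overline{f(W')}$, so some ball $B(f(g(x_0)),r)$ in $M$ misses $f(W')$. By continuity of $f\circ g$ and density of $D$ in $\tilde K$, the relatively open set $\{x\in A':d(f(g(x)),f(g(x_0)))<r/2\}$ (which contains $x_0$) meets $D$ in a point $x_1\in A'\cap C_n$. Membership $\Phi(g)\in\overline{\Phi(V)}$ yields $g'\in V$ with $d_n(f\circ g|_{C_n},f\circ g'|_{C_n})<r/2$, whence $f(g'(x_1))\in B(f(g(x_0)),r)$ and so $g'(x_1)\notin W'$; but $x_1\in A'\subseteq L$ and $g'\in[L,W']$ force $g'(x_1)\in W'$, a contradiction. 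I expect the main obstacle to be exactly this mismatch: the subbasic constraint lives on $K$, while the only available density lives on the larger $\tilde K$, and the approximating points $x_1\in D$ that the metric closure detects typically lie in $\tilde K\setminus K$, where a neighborhood of the form $[K,W']$ exerts no control. The device overcoming it is to trap $K$ between $A'$ and its closure $L\subseteq g_0^{-1}(W')$ inside $\tilde K$, so that competitor functions are controlled on the compact set $L\in\K$ that already contains the relevant density points near $K$; the separation $K\subseteq A'\subseteq L\subseteq\tilde K$ is where compactness (to keep $L\in\K$) and the closed cometric set $\tilde W$ interact, and normality of $\tilde K$ is the only point where regularity of the domain is used.
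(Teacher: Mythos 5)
Your construction is, modulo notation, the paper's own proof: your metrizable receptacle $\prod_{n\in\w}C(C_n,M)$ with sup-metrics is the paper's $\prod_{D\in\mathcal D}C_{k}(D,Y_\tau)$, the injectivity and continuity checks agree, and your closing argument (catch a density point $x_1\in \tilde K\cap\bigcup\C$ inside a metric ball, then apply the triangle inequality to a function $g'\in V$) is exactly how the paper separates a function $g\notin O_f$ from $V_f$ by a $\sigma$-open set. There is, however, one genuine gap, located precisely at the step you single out as the crux. You interpose $K\subseteq A'\subseteq L:=\overline{A'}^{\tilde K}\subseteq A$ by invoking ``normality of the compact Hausdorff space $\tilde K$''. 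But the theorem assumes only that $X$ is a topological space: $\tilde K$ need not be Hausdorff or regular, and a compact space without separation axioms need not admit any such interposition. Concretely, if $\tilde K$ is a compact interval with a doubled left endpoint $0_1,0_2$, $K=\{0_1\}$ and $A=\tilde K\setminus\{0_2\}$, then the closure of every relatively open $A'\supseteq K$ contains $0_2$, so no admissible $L$ exists. The generality is not idle: Corollaries~\ref{c1} and~\ref{c2} apply Theorem~\ref{t:Ck} to arbitrary (not necessarily Hausdorff) spaces $X$; for instance, for any countable space $X$ the family of all finite subsets of $X$ is separable and $C_\K(X,Y)=C_p(X,Y)$. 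So as written, your argument proves the theorem only when the compacta in $\K$ are regular (e.g.\ when $X$ is Hausdorff), and the remark that this is ``the only point where regularity of the domain is used'' is exactly where the proposal overshoots the hypotheses, since no separation axiom on the domain is available.

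The repair is local, and it is precisely the paper's device: perform the separation inside $Y$, which is regular because it is cometrizable, rather than inside $\tilde K$. Having found the open set $W'$ with $g_0(K)\subseteq W'$ and $f^{-1}(\overline{f(W')})\subseteq W$, use regularity of $Y$ and compactness of $g_0(K)$ to obtain an open set $N\subseteq Y$ with $g_0(K)\subseteq N\subseteq\overline{N}\subseteq W'$ (closure taken in $Y$). Now put $A':=\tilde K\cap g_0^{-1}(N)$ and $L:=\tilde K\cap g_0^{-1}(\overline{N})$. Then $L$ is closed in $\tilde K$, hence compact and a member of $\K$ by the hereditary clause of separability; $g_0(L)\subseteq\overline{N}\subseteq W'$, so $V=[L,W']$ is still a neighborhood of $g_0$; and $K\subseteq A'\subseteq L$ with $A'$ relatively open in $\tilde K$, which is all that your final verification uses. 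This is exactly the paper's sandwich $\overline{N}_{f(x)}\subseteq W_{f(x)}\subseteq\overline{W}_{f(x)}\subseteq V_{f(x)}$ together with its compacta $\tilde K_{i,x}\cap f^{-1}(\overline{W}_{f(x)})$. With this one change your proof is correct and coincides with the paper's.
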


This theorem will be proved in Section~\ref{s:Ck}. Now we derive some its corollaries.
\smallskip

A topological space $X$ is defined to be
\begin{itemize}\itemsep=2pt
\item {\em $\cs$-separable} if $X$ contains a countable set $D\subseteq X$, which is {\em sequentially dense} in $X$ in the sense that each point $x\in X$ is the limit of some convergent sequence $\{x_n\}_{n\in\w}\subseteq D$; 
\item {\em $\mathsf k$-separable} if $X$ contains a countable subset $D\subseteq X$, which is {\em $k$-dense}  in the sense that each compact set $K\subseteq X$ is contained in a compact set $C\subseteq X$ such that $K\subseteq\overline{C\cap D}$;
\item {\em $\sigma \mathsf k$-separable} if $X$ contains a $\sigma$-compact set $D\subseteq X$ such that each compact set $K\subseteq X$ is contained in a compact set $C\subseteq X$ such that $K\subseteq \overline{C\cap D}$.
\end{itemize}

It is clear that each $\mathsf k$-separable space $X$ is $\sigma\mathsf k$-separable (and $\cs$-separable if all compact subsets of $X$ are Fr\'echet-Urysohn). In \cite{GR}, $\sigma\mathsf k$-separable spaces are called spaces with property (CK).

Theorem~\ref{t:Ck} has two corollaries: 

\begin{corollary}\label{c1} For any $\sigma\mathsf k$-separable topological space $X$ and any cometrizable space $Y$ the function space $C_{k}(X,Y)$ is cometrizable.
\end{corollary}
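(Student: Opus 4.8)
The plan is to deduce Corollary~\ref{c1} directly from Theorem~\ref{t:Ck} by verifying that, for a $\sigma\mathsf k$-separable space $X$, the family $\K$ of \emph{all} compact subsets of $X$ is separable in the technical sense defined above. Since $C_k(X,Y)$ is by definition the space $C_\K(X,Y)$ for this particular choice of $\K$, once the separability of $\K$ is established the cometrizability of $C_k(X,Y)$ will follow immediately from the theorem, with no further work.

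First I would check the three defining conditions of a separable family for $\K=\{K\subseteq X:K\text{ is compact}\}$. The first condition (closedness under passing to compact subsets) and the second (density of $\bigcup\K$) are immediate: every compact subset of a compact set is again compact and hence lies in $\K$, while $\bigcup\K=X$ because all singletons are compact. The entire content of the argument therefore lies in producing the required countable subfamily $\C$.

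For the third condition I would invoke $\sigma\mathsf k$-separability directly. Fix a $\sigma$-compact set $D=\bigcup_{n\in\w}D_n\subseteq X$ witnessing the property, with each $D_n$ compact (and, replacing $D_n$ by $\bigcup_{i\le n}D_i$, increasing), and put $\C:=\{D_n:n\in\w\}$, so that $\bigcup\C=D$ and $\C\subseteq\K$. Given an arbitrary compact set $K\in\K$, $\sigma\mathsf k$-separability yields a compact set $C\supseteq K$ with $K\subseteq\overline{C\cap D}$. I would then set $\tilde K:=\overline{C\cap D}$. By construction $K\subseteq\tilde K$, and since $C\cap D\subseteq\tilde K\cap\bigcup\C$ with $\overline{C\cap D}=\tilde K$, the set $\tilde K\cap\bigcup\C$ is dense in $\tilde K$, exactly as required.

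The one point that needs care --- and the main (mild) obstacle --- is verifying that $\tilde K=\overline{C\cap D}$ is itself compact, so that $\tilde K\in\K$. This is where one uses that compact subsets of $X$ are closed: then $\overline{C\cap D}\subseteq\overline C=C$ is a closed subset of the compact set $C$, hence compact. Under the standing Hausdorff assumption on $X$ this is automatic. With $\K$ thus shown to be a separable family of compact sets, Theorem~\ref{t:Ck} applies verbatim to the cometrizable space $Y$ and completes the proof.
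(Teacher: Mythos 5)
Your route is exactly the one the paper intends: Corollary~\ref{c1} is stated as an immediate consequence of Theorem~\ref{t:Ck} with no separate argument, so the entire content is the verification that the family $\K$ of \emph{all} compact subsets of a $\sigma\mathsf k$-separable space $X$ is a separable family, which is precisely what you do. Your treatment of the first two conditions and of the density clause in the third condition is correct.

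The one flaw is the final step, where you need $\tilde K=\overline{C\cap D}\in\K$ and appeal to ``the standing Hausdorff assumption on $X$''. There is no such assumption: Corollary~\ref{c1}, Theorem~\ref{t:Ck} and the definition of $\sigma\mathsf k$-separability are all stated for an arbitrary topological space $X$, and in a non-Hausdorff space the closure of a compact set need not be compact (in the particular-point topology on an infinite set, the closure of the singleton consisting of the particular point is the whole space, which is not compact). So, as written, your proof covers only Hausdorff $X$. The gap is repaired without any extra hypothesis by taking $\tilde K:=C\cap\overline{C\cap D}$ instead: this set is closed in the compact set $C$ (only the closedness of $\overline{C\cap D}$ in $X$ is used, no separation axiom), hence compact and in $\K$; it contains $K$ because $K\subseteq C$ and $K\subseteq\overline{C\cap D}$; and it meets $\bigcup\C=D$ in a set containing $C\cap D$, whose closure contains $\overline{C\cap D}\supseteq\tilde K$, so $\tilde K\cap\bigcup\C$ is dense in $\tilde K$. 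With this substitution your argument proves the corollary in its stated generality.
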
 

\begin{corollary}\label{c2} For any $\cs$-separable topological space $X$ and any cometrizable space $Y$ the function space $C_{\cs}(X,Y)$ is cometrizable.
\end{corollary}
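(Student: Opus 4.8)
The plan is to obtain Corollary~\ref{c2} from Theorem~\ref{t:Ck} in the same way one obtains Corollary~\ref{c1}: by exhibiting a separable family $\K$ of compact subsets of $X$ for which $C_\K(X,Y)$ carries exactly the topology of $C_\cs(X,Y)$, and then quoting the theorem. The obvious candidate is the family $\K$ of all convergent sequences in $X$, closed off under passing to compact subsets (so that finite subsets of convergent sequences are admitted); this closing does not alter the topology generated by the subbase $\{[K,U]\}$, so that $C_\K(X,Y)=C_\cs(X,Y)$. Throughout I fix a countable sequentially dense set $D\subseteq X$ witnessing the $\cs$-separability of $X$.

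Two of the three requirements on a separable family are immediate. A compact, hence closed, subset of a convergent sequence $\{x\}\cup\{x_n:n\in\w\}$ is either finite or again a convergent sequence (when it contains $x$ together with infinitely many terms), so $\K$ is closed under compact subsets. For the density of $\bigcup\K$ I use sequential density: each $x\in X$ is the limit of some sequence $\{x_n\}_{n\in\w}\subseteq D$, so $x$ lies in the member $\{x\}\cup\{x_n:n\in\w\}$ of $\K$, whence $\bigcup\K=X$.

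The essential point, and the step I expect to be the main obstacle, is the third requirement: a countable subfamily $\C\subseteq\K$ to which every member of $\K$ can be enlarged densely. Here $\cs$-separability differs crucially from the $\sigma\mathsf{k}$-separability used in Corollary~\ref{c1}. There $\bigcup\C$ may be taken $\sigma$-compact and as large as all of $X$, so that after setting $\tilde K=\overline{C\cap D}$ the density of $\tilde K\cap\bigcup\C$ in $\tilde K$ comes for free. Here, by contrast, $\bigcup\C$ is a countable union of countable convergent sequences and is therefore countable, while a convergent sequence $\tilde K\in\K$ has all of its nonlimit points isolated; density of $\tilde K\cap\bigcup\C$ in $\tilde K$ thus forces every term of $\tilde K$ into the fixed countable set $\bigcup\C$. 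Since an arbitrary convergent sequence $K\in\K$ may have terms lying outside $D$ --- and outside any prescribed countable set --- one cannot simply enlarge $K$ to a convergent sequence whose terms lie in $\bigcup\C$.

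The way I would circumvent this is to build $\C$ and the enlargements entirely out of $D$: for each $d\in D$ choose, by sequential density, a sequence $S_d\subseteq D$ converging to $d$ and put $\C:=\{\{d\}\cup S_d:d\in D\}$, a countable subfamily with $\bigcup\C=D$; and then to reduce the topology of $C_\cs(X,Y)$ to one controlled by convergent sequences drawn from $D$, using that a continuous function is determined, and that its values on an arbitrary convergent sequence are approximated, by its restriction to the sequentially dense set $D$. Verifying that this $D$-generated family is simultaneously separable (bullet three then holding with $\tilde K=K$ for sequences already supported on $D$, since $K\cap D$ is dense in such a $K$) and large enough to generate the full $C_\cs(X,Y)$-topology is exactly the technical heart of the proof; once it is in place, Theorem~\ref{t:Ck} delivers the cometrizability of $C_\cs(X,Y)$, with Proposition~\ref{p:sub} available should one prefer to realize $C_\cs(X,Y)$ as a subspace of a function space of the more tractable $D$-supported type.
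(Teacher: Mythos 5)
You have put your finger on exactly the right difficulty --- bullet three of the definition of a separable family --- but your proposal defers it rather than resolves it, and in fact it cannot be resolved along the route you sketch. Take $X=\IR$ (which is $\cs$-separable with $D=\IQ$) and $Y=[0,1]$. First, your $D$-supported family generates a \emph{strictly coarser} topology than $C_\cs(X,Y)$: let $S=\{0\}\cup\{x_n\}_{n\in\w}$ with the $x_n$ irrational, $x_n\to 0$, let $f\equiv 1$ and $U=(\tfrac12,1]$. If $K_1,\dots,K_m$ are compact sets each having all but finitely many points in $\IQ$, then some $x_{n_0}\notin K_1\cup\dots\cup K_m$, and since this union is compact there is a continuous $g:\IR\to[0,1]$ with $g=1$ on $K_1\cup\dots\cup K_m$ and $g(x_{n_0})=0$; such $g$ lies in any basic neighborhood $\bigcap_j[K_j,V_j]$ of $f$ but not in $[S,U]$. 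So $[S,U]$ is not open in the $D$-supported topology: a continuous function \emph{is} determined by its values on $D$, but membership in $[S,U]$ is not detected by finitely many $D$-supported constraints. Second, the same Urysohn argument rules out \emph{every} choice of family: if $\K$ is any family of compact sets with $C_\K(\IR,[0,1])=C_\cs(\IR,[0,1])$, then applying $\cs$-openness of $[K,U]$ at $f\equiv1$ shows each $K\in\K$ is covered by finitely many convergent sequences, hence is countable with finitely many non-isolated points. If such a $\K$ also satisfied bullet three with a countable $\C\subseteq\K$, then $E:=\bigcup\C$ would be countable, and for each $K\in\K$ the enlargement $\tilde K$ would give $K\subseteq\overline{\tilde K\cap E}\subseteq(\tilde K\cap E)\cup\{\text{non-isolated points of }\tilde K\}$, i.e.\ $K\setminus E$ finite. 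Now take a convergent sequence $S\subseteq\IR\setminus E$: by the covering argument $S\subseteq K_1\cup\dots\cup K_m$ with $K_j\in\K$, whence the infinite set $S=S\setminus E$ is contained in the finite set $\bigcup_j(K_j\setminus E)$ --- a contradiction. So no separable family induces the topology of $C_\cs(\IR,[0,1])$, and Theorem~\ref{t:Ck} cannot deliver Corollary~\ref{c2} for uncountable $X$ by any choice of $\K$. (Your plan does work verbatim when $X$ is countable: take $\C$ to be all finite subsets of $X$, so $\bigcup\C=X$. Also, as a side issue, your parenthetical claim that closing off under compact subsets does not change the topology is false when $X$ has an isolated point lying on a convergent sequence: adding singletons then genuinely refines the topology.)

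For comparison: the paper offers no proof of Corollary~\ref{c2} at all, and the obstruction above is, I believe, the reason why in the one place where a result of this kind is actually needed --- Lemma~\ref{l4} --- the authors do \emph{not} invoke Corollary~\ref{c2}, but instead apply Theorem~\ref{t:Ck} to the $D$-restricted family (finite sets together with convergent sequences $K$ satisfying $K\cap D$ dense in $K$) and to the space $C_\K(C_\tau(X))$, not to $C_\cs(C_\tau(X))$. That $D$-restricted topology is exactly what your repaired argument would handle: completed along the lines you sketch, your proof establishes cometrizability of $C_{\K}(X,Y)$ for the $D$-supported family $\K$, which is a weaker statement than Corollary~\ref{c2}. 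The corollary as stated, for uncountable $\cs$-separable $X$ such as $\IR$, is not obtainable by this method --- and in my view its correctness is genuinely in doubt, since the natural converse technique (as in Gartside--Reznichenko's characterization of cometrizability of $C_k(X)$) suggests that cometrizability of $C_\cs(X,Y)$ should require a countable absorbing system of convergent sequences, which $\IR$ does not possess.
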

  
Corollary~\ref{c1} should be compacred with the following characterization of $\sigma\mathsf k$-separabe spaces, proved by Gartside and Reznichenko in \cite[Theorem 16]{GR}. 

\begin{theorem}[Gartside, Reznichenko] A Tychonoff space $X$ is $\sigma\mathsf k$-separable if and only if its function space $C_k(X)$ is cometrizable.
\end{theorem}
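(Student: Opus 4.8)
The plan is to prove the two implications separately; the forward implication is essentially free, while the converse carries all the content. \emph{Sufficiency.} If $X$ is $\sigma\mathsf{k}$-separable, then since $\IR$ is metrizable and hence cometrizable (Proposition~\ref{p:sub}), Corollary~\ref{c1} applied with $Y=\IR$ yields at once that $C_k(X)=C_k(X,\IR)$ is cometrizable, so nothing remains to be done in this direction.

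\emph{Necessity.} Assume $C_k(X)$ is cometrizable, witnessed by a weaker metrizable topology $\tau_m$ on $C(X,\IR)$ (with metric $d$) such that, with respect to the compact-open topology $\tau_k$, every function has a $\tau_k$-neighborhood base of $\tau_m$-closed sets. First I would harvest a candidate $\sigma$-compact set from the behaviour at the zero function $0$. As $\tau_m$ is metrizable, $0$ has a countable $\tau_m$-neighborhood base $\{B_d(0,1/n)\}_{n\in\w}$; each ball is $\tau_m$-open, hence $\tau_k$-open, hence contains a basic compact-open neighborhood $[K_n,(-\e_n,\e_n)]$ for some compact $K_n\subseteq X$ and $\e_n>0$. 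Put $D:=\bigcup_{n\in\w}K_n$, a $\sigma$-compact set, and $D_m:=\bigcup_{n\le m}K_n$. A sanity check is that $D$ is dense: any $f$ vanishing on $D$ lies in every $[K_n,(-\e_n,\e_n)]\subseteq B_d(0,1/n)$, so $d(f,0)=0$ and $f=0$, which by complete regularity forces $\overline D=X$.

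The heart of the argument is to upgrade density to $\sigma\mathsf{k}$-separability. Fix a compact $K\subseteq X$. Applying cometrizability to the open set $[K,(-1,1)]\ni 0$ produces a $\tau_m$-closed $\tau_k$-neighborhood $N$ of $0$ with $[L,(-\delta,\delta)]\subseteq N\subseteq[K,(-1,1)]$ for some compact $L\subseteq X$ and $\delta>0$; testing $N$ against functions vanishing on $L$ already forces $K\subseteq L$, so $C:=K\cup L$ is a compact superset of $K$. I then claim $K\subseteq\overline{C\cap D}$, which is exactly the defining property. Suppose not, and pick $x\in K$ with $x\notin\overline{C\cap D}$; by complete regularity choose $g\in C(X,\IR)$ with $g(x)=2$ and $g\equiv 0$ on $\overline{C\cap D}\supseteq L\cap D$. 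The plan is to exhibit $g$ as a limit, in the topology of uniform convergence on $(K_n)$, of functions from $[L,(-\delta,\delta)]$: for every $m$ and $\eta>0$ one builds $h\in C(X,\IR)$ with $\sup_{D_m}|h-g|<\eta$ and $\sup_{L}|h|<\delta$, the two requirements being compatible because on the overlap $L\cap D_m\subseteq L\cap D$ we have $g=0$, so a continuous $h$ with the prescribed behaviour on the compact set $D_m\cup L$ exists (continuous functions on compact subsets of a Tychonoff space extend to $X$). Since these approximants lie in $[L,(-\delta,\delta)]\subseteq N$, converge to $g$, and $N$ is closed, one would conclude $g\in N\subseteq[K,(-1,1)]$, contradicting $g(x)=2$; this contradiction proves $K\subseteq\overline{C\cap D}$ and finishes the necessity.

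The step I expect to be the main obstacle is precisely the convergence $h\to g$ \emph{in the metric topology} $\tau_m$, needed in order to invoke the $\tau_m$-closedness of $N$. The construction naturally yields convergence only in the translation-invariant topology $\sigma$ of uniform convergence on $(K_n)$, whose neighborhoods of an arbitrary $g$ are controlled by $D$; by contrast the abstract metric $d$ need not be translation-invariant, so its balls centred at $g$ (rather than at $0$) are not obviously governed by the $K_n$. Reconciling these is the crux: the clean way is to use that $C_k(X)$ is a topological group and to replace $\tau_m$ by the weaker \emph{group} topology $\sigma$ with countable base $\{[D_m,(-1/j,1/j)]\}$ at $0$, checking that $\sigma$ is coarser than $\tau_k$ and that every $\tau_m$-closed neighborhood furnished by cometrizability remains $\sigma$-closed, so that the invariant approximation above applies verbatim. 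Once this transport from the origin to a general $g$ is justified, the remainder is the routine Tychonoff extension and the complete-regularity separations indicated above.
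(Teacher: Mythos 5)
You should first be aware that the paper does not actually prove this statement: it is quoted from Gartside--Reznichenko \cite[Theorem 16]{GR}. The only half that the paper's own machinery yields is sufficiency, via Corollary~\ref{c1} (i.e.\ Theorem~\ref{t:Ck} with $Y=\IR$), and your sufficiency paragraph is exactly that, so it is fine. The comparison therefore rests entirely on your necessity argument, which is where all the content lies.

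There is a genuine gap in the necessity half, precisely at the step you flag, and your proposed repair does not close it. Your argument is sound up to and including the construction of the approximants $h$ (the choice of $D=\bigcup_n K_n$, its density, the inclusion $K\subseteq L$, and the extension of $h$ from the compact set $D_m\cup L$ are all correct). What you then need is that $g$ lies in the $\tau_m$-closure of $[L,(-\delta,\delta)]$, i.e.\ that \emph{every} $\tau_m$-neighborhood of $g$ meets $[L,(-\delta,\delta)]$. But all the information you have about $\tau_m$ --- the inclusions $[K_n,(-\e_n,\e_n)]\subseteq B_d(0,1/n)$ and the closed neighborhoods furnished by cometrizability --- concerns the neighborhood filter of $\tau_m$ at the single point $0$. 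Passing to the group topology $\sigma$ generated at $0$ by the sets $[D_m,(-1/j,1/j)]$ does not help, because ``every $\tau_m$-closed neighborhood furnished by cometrizability remains $\sigma$-closed'' is not a checkable side condition but precisely the missing statement: $\sigma$-closedness of $N$ demands, for each $g\notin N$, a translate $g+[D_m,(-1/j,1/j)]$ disjoint from $N$, whereas $\tau_m$-closedness only provides a $d$-ball $B_d(g,\e)$ disjoint from $N$, and the two filters are known to be comparable only at $0$; since $d$ need not be translation invariant, it is a black box at every other point. In effect you are assuming that a cometrizable topological group admits a cometrization witnessed by a weaker metrizable \emph{group} topology --- an assertion you do not prove, and which is essentially the whole difficulty of the Gartside--Reznichenko theorem. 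Note also that you cannot sidestep this by making the approximants converge in $\tau_k$ (which would imply $\tau_m$-convergence, as $\tau_m\subseteq\tau_k$): no sequence in $[L,(-\delta,\delta)]$ can $\tau_k$-converge to $g$, since $\tau_k$-convergence is uniform on the compact set $L$ while $|g(x)|=2$ at the point $x\in K\subseteq L$. So the argument genuinely requires control of $\tau_m$ away from $0$, which cometrizability by itself does not supply; as it stands, only the sufficiency direction is proved.
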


In \cite[Corollary 11]{GR}, Gartside and Reznichenko characterized Tychonoff space with cometrizable spaces $C_p(X)$ as follows.

\begin{theorem}[Gartside, Reznichenko] For a Tychonoff space $X$ the following conditions are equivalent:
\begin{enumerate}
\item the function space $C_p(X)$ is cometrizable;
\item $C_p(X)$ contains a dense cometrizable subspace;
\item $C_p(X)$ is metrizable;
\item $X$ is countable.
\end{enumerate}
\end{theorem}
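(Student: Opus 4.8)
The plan is to prove the four conditions equivalent through the cycle $(4)\Ra(3)\Ra(1)\Ra(2)\Ra(4)$, noting that three of these four implications are essentially formal. For $(4)\Ra(3)$: if $X=\{x_n:n\in\w\}$ is countable, then the evaluation map $C_p(X)\to\IR^\w$, $f\mapsto(f(x_n))_{n\in\w}$, is a homeomorphic embedding into a countable metrizable product, so $C_p(X)$ is metrizable. (This is one half of the classical fact that $C_p(X)$ is metrizable iff $X$ is countable, which also yields $(3)\Ra(4)$ and which I shall reuse below.) For $(3)\Ra(1)$: every metrizable space is cometrizable by Proposition~\ref{p:sub}. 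For $(1)\Ra(2)$: a space is a dense subspace of itself. Thus everything reduces to the implication $(2)\Ra(4)$.

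I would prove $(2)\Ra(4)$ by contraposition: assuming $X$ is uncountable, I would show that $C_p(X)$ contains no dense cometrizable subspace. Suppose, for contradiction, that $D\subseteq C_p(X)$ is dense and cometrizable, and let $m$ be a weaker metrizable topology on $D$ witnessing cometrizability, so that each point of $D$ has a neighborhood base (in the topology inherited from $C_p(X)$) consisting of $m$-closed sets. The first reduction is to separability: since cometrizable spaces are submetrizable and $C_p(X)$ is a topological group, I would argue that the existence of a dense submetrizable $D$ forces $X$ to be separable, and that $m$ may then be taken to be a \emph{separable} metric topology, generated by countably many continuous real-valued functions on $D$. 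Fix a countable set $A\subseteq X$ with $\overline A=X$.

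The heart of the argument is then to locate a coordinate at which cometrizability fails. Consider the evaluation maps $e_x\colon D\to\IR$, $f\mapsto f(x)$, for $x\in X$. If every $e_x$ were $m$-continuous, then $m$ would refine the topology of pointwise convergence, whence $m$ would coincide with the subspace topology and $D$ would be metrizable; a short separate argument (using density of $D$ and homogeneity of the group $C_p(X)$, together with the classical metrizability criterion) then returns $X$ countable, a contradiction. So some $e_p$ fails to be $m$-continuous. For this $p$ fix $g_0\in D$ and consider the basic neighborhood $U=\{f\in D:|f(p)-g_0(p)|<1\}$. I would show that for \emph{every} neighborhood $V$ of $g_0$ in $D$ the $m$-closure $\overline V^{\,m}$ meets the complement of $U$, contradicting the defining property of $m$. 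Indeed, $V$ contains a basic set $W=\{f\in D:|f(x)-g_0(x)|<\e,\ x\in F\}$ with $F\subseteq X$ finite and $\e>0$; choosing (by complete regularity of $X$) a target $g$ with $g(p)$ far from $g_0(p)$ and $g$ agreeing with $g_0$ on $F$, I would construct, for each finite family of coordinates controlling a prescribed $m$-ball about $g$, an element of $W$ interpolating $g$ on those finitely many coordinates while remaining in $W$. This exhibits $g\in\overline W^{\,m}\subseteq\overline V^{\,m}$ with $g\notin U$, the desired contradiction.

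The main obstacle is the passage from the abstract weaker metric $m$ to something controlled by finitely many coordinates at a time, i.e.\ establishing that the separable metric topology $m$ is determined by a countable family of continuous functions each depending (in a suitable sense) on only countably many coordinates of $X$, so that an $m$-ball can be approximated by finitely many pointwise constraints. Equivalently, one must rule out that $m$ encodes uncountably many coordinates in a genuinely non-coordinatewise fashion; this is a Mazur-type dependence phenomenon on the \emph{dense subspace} $D$ of the product $\IR^X$, and it is more delicate than the corresponding statement for the full product. A secondary, more routine, difficulty is the transfer of information from $D$ back to $C_p(X)$ (and the verification that a bad coordinate $p$ can be selected), for which homogeneity of $C_p(X)$ and density of $D$ are the natural tools.
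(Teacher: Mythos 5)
Your cycle $(4)\Rightarrow(3)\Rightarrow(1)\Rightarrow(2)\Rightarrow(4)$ is a correct decomposition and the three formal implications are fine, but your plan for $(2)\Rightarrow(4)$ has two genuine gaps. First, your opening reduction rests on a false lemma: a dense \emph{submetrizable} subspace of $C_p(X)$ does not force $X$ to be separable. Let $X$ be discrete with $\aleph_1\le|X|\le\mathfrak c$. Then $C_p(X)=\IR^X$ is separable by the Hewitt--Marczewski--Pondiczery theorem, and any countable dense subspace $D\subseteq \IR^X$, being a countable Tychonoff space, admits a continuous injection into $\IR^\w$ and hence is submetrizable; yet $X$ is not separable (the group structure does not help, since $D$ need not be a subgroup). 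What one can legitimately extract at this stage is second countability of the witnessing topology $m$, and it comes from cellularity rather than from separability of $X$: $\IR^X$ is ccc because $\IR$ is separable, hence the dense subspace $D$ is ccc, hence its weaker metrizable topology $m$ is ccc, and a ccc metrizable space is second countable. Second, and decisively, the step you yourself label ``the main obstacle'' is the entire content of the theorem, and your sketch has no mechanism for it. Your interpolation produces $g\in\overline{W}^{\,m}\setminus U$ only when the finite support $F$ of the basic set $W$ avoids the coordinate $p$; but cometrizability at $g_0$ quantifies over \emph{all} neighborhoods $V$, in particular over $V=\{f\in D:|f(p)-g_0(p)|<\e\}$, every basic subset of which constrains the coordinate $p$ itself, and there your construction is impossible (you cannot have $g$ agree with $g_0$ on $F\ni p$ while $|g(p)-g_0(p)|\ge1$). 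Nor can the failure of $m$-continuity of $e_p$, which occurs at some other point $h\in D$, be transported to your pair $(g_0,U)$: translation by $g_0-h$ need not preserve $D$. So the intended contradiction never materializes.

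The idea that closes this gap in the paper's own argument (written there for the more general spaces $C_p(X,Y)$ with $Y$ separable) is a closing-off argument, which replaces your unattainable ``coordinatewise control of $m$-balls'' by countable reflection. After making $m$ second countable as above, one fixes, for each finite partial function $f$ from $X$ into $\IQ$, a countable $m$-dense set $\Psi[f]$ in the set of continuous functions extending $f$; for each $g$ a countable $m$-neighborhood base $\{U_k(g)\}_{k\in\w}$; and finite sets $F_{g,k}\subseteq X$ such that every continuous function agreeing with $g$ on $F_{g,k}$ lies in $U_k(g)$ (these exist precisely because $m$ is weaker than the pointwise topology). One then builds an increasing sequence $(Z_n)_{n\in\w}$ of countable subsets of $X$ such that $Z_{n+1}$ absorbs all supports $F_{g,k}$ with $g\in\Psi[f]$ and $\dom(f)\subseteq Z_n$, and proves $X=\bigcup_{n\in\w}Z_n$. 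Indeed, if some $x_0$ lay outside this union, take the constant function $g_0$, the neighborhood $U=\{f:|f(x_0)-g_0(x_0)|<1\}$, and a basic $V=\{f:f(F)\subseteq W'\}$ whose $m$-closure lies in $U$; a continuous $\tilde g$ with $\tilde g(x_0)$ far from $g_0(x_0)$ and $\tilde g(F\cap Z)\subseteq\IQ\cap W'$ lies outside $\overline V^{\,m}$, so some $g\in\Psi[\tilde g{\restriction}(F\cap Z)]$ has $U_k(g)\cap V=\emptyset$; but then the function agreeing with $g$ on $F_{g,k}\subseteq Z$ and constant on $F\setminus F_{g,k}$ lies in both $U_k(g)$ and $V$, a contradiction. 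Uncountability of $X$ enters only through the existence of $x_0$, and the finite supports are collected for countably many functions at once, not for individual $m$-balls. Without this (or an equivalent) device your outline cannot be completed.
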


Now we study the relation of the class of cometrizable spaces to other known classes of generalized metric spaces.  We start with stratifiable spaces and their generalizations.

A regular topological space $X$ is called
\begin{itemize}\itemsep=2pt
\item {\em stratifiable}  if each point $x\in X$ has a countable system of open neighborhoods $(U_n(x))_{n\in\w}$ such that each closed subset $F$ of $X$ is equal to $\bigcap_{n\in\w}\overline{U_n[F]}$ where $U_n[F]=\bigcup_{x\in F}U_n(x)$;
\item {\em semi-stratifiable}   if each point $x\in X$ has a countable system of open neighborhoods $(U_n(x))_{n\in\w}$ such that each closed subset $F$ of $X$ is equal to $\bigcap_{n\in\w}U_n[F]$;
\item {\em quarter-stratifiable} if there exists a function $U:X\times\IN\to\tau$ such that $X=\bigcup_{x\in X}U(x,n)$ for all $n\in\w$ and for any point $x\in X$, any  sequence $\{x_n\}_{n\in\w}\subseteq X$ with $x\in\bigcap_{n\in\w}U(x_n,n)$ converges to $x$;
\item a space with {\em $G_\delta$-diagonal} if the diagonal $\{(x,x):x\in X\}$ is a $G_\delta$-set in $X\times X$.
\end{itemize}
Stratifiable spaces were introduced by Borges \cite{Borges} but were known earlier as $M_3$-spaces of Ceder \cite{Ceder}. Semi-stratifiable spaces were introduced by Greede \cite{Greede} and quarter-stratifiable spaces by Banakh, who proved in \cite{Ban02} that every semi-stratifiable space is quarter-stratifiable and  every quarter-stratifiable space has $G_\delta$-diagonal. More information on stratifiable and semi-stratifiable spaces can be found in \cite{Grue} and \cite{Grue2}. The following important result was proved by Gartside and Reznichenko in \cite[Proposition 1]{GR}.

\begin{theorem}[Gartside, Reznichenko] \label{t:S} Each stratifiable space is cometrizable.
\end{theorem}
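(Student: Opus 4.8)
The plan is to use the second description of cometrizability from the introduction and to take the weaker metrizable space $M$ to be $X$ itself equipped with a suitable coarser topology $\tau_M$, with $f$ the identity map. Thus it suffices to produce a metrizable topology $\tau_M\subseteq\tau$ on $X$ (here $\tau$ denotes the original topology) such that for every $\tau$-open set $U$ and every point $x\in U$ there is a $\tau$-neighborhood $V$ of $x$ with $\mathrm{cl}_{\tau_M}(V)\subseteq U$. Before constructing $\tau_M$ I would record two structural facts about a stratifiable space $X$ with stratification $(U_n(x))_{n\in\w}$: first, that $\overline A=\bigcap_{n\in\w}\overline{U_n[A]}$ for \emph{every} $A\subseteq X$ (this follows from the defining identity applied to the closed set $\overline A$, using $U_n[A]\subseteq U_n[\overline A]$); and second, that $X$ is perfectly normal and paracompact, so that every closed subset of $X$ is the zero-set of a continuous real-valued function and every discrete family of closed sets can be swelled to a discrete family of open sets.

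Next I would build $\tau_M$. Starting from the stratification I would pass to a $\sigma$-cushioned pair-base (equivalently, a $\sigma$-discrete network) $\mathcal P=\bigcup_{n\in\w}\mathcal P_n$ witnessing stratifiability, and for each pair $(P,Q)\in\mathcal P_n$ use perfect normality to choose a continuous function separating $\overline P$ from $X\setminus Q$. Organizing these functions level by level, I would let $\tau_M$ be the topology generated by the resulting countable union of families of open sets, chosen so that the generating family is $\sigma$-discrete with respect to $\tau_M$ \emph{itself}; metrizability of $\tau_M$ would then follow from the Bing--Nagata--Smirnov metrization theorem. The design principle is that the metric on $(X,\tau_M)$ should refine exactly the closure data of the stratification, so that $\tau_M$-closures of sets are controlled by the operators $A\mapsto\overline{U_n[A]}$.

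Granting such a $\tau_M$, the shrinking condition is the easy half. Given $x\in U$ with $U$ being $\tau$-open, put $F=X\setminus U$; then $F$ is closed and $x\notin F=\bigcap_{n\in\w}\overline{U_n[F]}$, so $x\notin\overline{U_n[F]}$ for some $n$. The $\tau$-open set $W=X\setminus\overline{U_n[F]}$ contains $x$, and, because $\tau_M$ was built to trap metric closures inside the stratification closures, a neighborhood $V\ni x$ refining $W$ (for instance $V=U_m(x)\cap W$ for large $m$) satisfies $\mathrm{cl}_{\tau_M}(V)\cap F=\emptyset$, i.e. $\mathrm{cl}_{\tau_M}(V)\subseteq U$, as required.

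The main obstacle is the metrizability of $\tau_M$, and specifically the requirement that the generating family be $\sigma$-discrete \emph{in the coarse topology} $\tau_M$, not merely in the original topology $\tau$: a family that is discrete in the finer topology $\tau$ need not remain discrete after passing to the coarser $\tau_M$. Overcoming this is exactly where the cushioning (closure-preserving) property of the stratification must be exploited, together with the paracompact swellings and the zero-set functions from perfect normality, in order to certify that each level of the constructed base is $\tau_M$-discrete and that the closures computed in $\tau_M$ stay inside the stratification closures. Everything else---the reduction to constructing $\tau_M$ and the verification of the shrinking condition---is routine once this metrization step is in place.
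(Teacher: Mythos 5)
Your reduction (a coarser metrizable topology $\tau_M$ on $X$, with the identity map) and your verification scheme (pass from $x\in U$ to $F=X\setminus U$, pick $n$ with $x\notin\overline{U_n[F]}$) follow the same outline as the paper's proof, but there is a genuine gap at exactly the point you yourself flag as ``the main obstacle'': the topology $\tau_M$ is never constructed. You stipulate that its generating family should be $\sigma$-discrete with respect to $\tau_M$ itself so that the Bing--Nagata--Smirnov theorem applies, you correctly observe that discreteness in the original topology need not survive passage to a coarser one, and you then assert that the cushioning property, paracompact swellings and zero-set functions ``must be exploited'' to certify this --- but no argument is given, and that certification is the entire content of the theorem. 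Since both halves of your write-up are explicitly conditional on this object (``granting such a $\tau_M$\dots''), and since even the easy half leans on an unestablished property (``$\tau_M$ traps metric closures inside the stratification closures''), the proposal is a plan that names the difficulty rather than a proof that resolves it.

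It is worth seeing how the paper's proof sidesteps your obstacle entirely: it never verifies a Bing-type condition in the coarse topology and never applies a metrization theorem to a generated topology. Starting from a $\sigma$-discrete \emph{closed} network $\mathcal N=\bigcup_{k\in\w}\mathcal N_k$ (stratifiable $\Rightarrow$ $\sigma$-space), paracompactness yields for each $k$ a family of open sets $O_{k,N,n}=St(N,\U_k)\cap W_n[N]$, $N\in\mathcal N_k$, which is discrete in the \emph{original} topology, and perfect normality yields continuous functions $f_{k,N,n}:X\to[0,1]$ with $f_{k,N,n}^{-1}(1)=N$ and $f_{k,N,n}^{-1}(0)=X\setminus O_{k,N,n}$. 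Discreteness of the supports makes the bundled map $f_{k,n}:X\to\ell_1(\mathcal N_k)$, $f_{k,n}(x)=\sum_{N\in\mathcal N_k}f_{k,N,n}(x)\cdot e_N$, well defined and continuous, and all these maps combine into a continuous $f:X\to\prod_{k\in\w}\ell_1(\mathcal N_k)^\w$. The witnessing topology is simply the pullback $\tau=\{f^{-1}(U):U\ \mbox{open in}\ f(X)\}$, which is \emph{automatically} pseudometrizable --- no discreteness in $\tau$ itself is ever needed. The concrete form of your ``trapping'' is then the inclusion chain $X\setminus O_x\subseteq W\subseteq W_n[F]\subseteq X\setminus V_x$, where $W$ is the union of the $\tau$-open sets $O_{k,N,n}$ over the network elements $N$ lying in $X\setminus\overline{U}$; this exhibits the $\tau$-closed set $X\setminus W$ squeezed between $V_x$ and $O_x$, and it also gives $T_1$, upgrading pseudometrizability to metrizability. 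If you want to salvage your route, this is the missing idea: replace ``generate $\tau_M$ and check $\sigma$-discreteness in $\tau_M$'' by ``map continuously into a metric space (here $\prod_k\ell_1(\mathcal N_k)^\w$) and pull the topology back'', which is exactly where discreteness in the original topology suffices.
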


Next, we recall some local properties of topological spaces.

A topological space $X$ is defined to be
\begin{itemize}\itemsep=2pt
\item {\em Fr\'echet-Urysohn} if for any subset $A\subseteq X$ and point $x\in\bar A$ there exists a sequence $\{x_n\}_{n\in\w}\subseteq A$ that conveges to $x$;
\item {\em sequential} if for any non-closed set $A\subseteq X$ there exists a sequence $\{x_n\}_{n\in\w}\subseteq A$ that converges to a point $x\in X\setminus A$;
\item a {\em $k$-space} if for any non-closed set $A\subseteq X$ there exists a compact set $K\subseteq X$ such that $K\cap A$ is not closed in $K$;
\item a {\em $k_\IR$-space} if the continuity of a function $f:X\to\IR$ is equivalent to the continuity of its restrictions $f{\restriction}K$ onto compact subsets $K$ on $X$;
\smallskip

\item {\em Ascoli} if for any compact subset $K\subseteq C_{k}(X)$ the map $K\times X\to\IR$, $(f,x)\mapsto f(x)$, is continuous;
\item {\em $\cs$-Ascoli} (or {\em sequentially Ascoli}) if for any convergent sequence $K\subseteq C_{k}(X)$ the map $K\times X\to\IR$, $(f,x)\mapsto f(x)$, is continuous.
\end{itemize} 
By a {\em convergent sequence} we understand a compact countable set with a unique non-isolated point. Ascoli spaces were introduced and studied in \cite{BG}.
By \cite[5.4]{BG} (and \cite[5.8]{BG}), a (Tychonoff) space $X$ is Ascoli if and only if the canonical map $\delta:X\to C_{k}(C_{k}(X))$ assigning to each $x\in X$ the Dirac measure $\delta_x:f\mapsto f(x)$ is continuous (if and only if the map $\delta$ is a topological embedding). Sequentially Ascoli spaces were studied in \cite{Gab1}, \cite{Gab2}  and in \cite{BanFan} (as spaces containing no strict $\mathsf{Cld}^\w$-fans).  For any Tychonoff space $X$ we have the implications:
$$\mbox{Fr\'echet-Urysohn $\Ra$ sequential $\Ra$ $\mathsf k$-space $\Ra$ $\mathsf k_\IR$-space $\Ra$ Ascoli $\Ra$ $\cs$-Ascoli}.$$
The unique non-trivial implication ($k_\IR$-space $\Ra$ Ascoli) in this diagram is due to Noble \cite{Noble}, see \cite[\S5]{BG}. By \cite[2.18]{Gab2}, any non-discrete $P$-space is $\cs$-Ascoli but not Ascoli. 

There is a useful characterization of $\cs$-Ascoli spaces in terms of (strict) $\Cld^\w$-fans, which are defined as follows.

A sequence $(F_n)_{n\in\w}$ of {\em closed} subsets of a topological space $X$ is called 
\begin{itemize}\itemsep=2pt
\item  {\em compact-finite} in $X$ if each compact subset $K\subseteq X$ the set $\{n\in\w:K\cap F_n\ne\emptyset\}$ is finite;
\item {\em strictly compact-finite} in $X$ if each set $F_n$ has a functionally open neighborhood $U_n\subseteq X$ such that for any compact set $K\subseteq X$ the set $\{F\in\F:K\cap U_n\ne\emptyset\}$ is finite;
\item {\em accumulating at} a point $x\in X$ if for each neighborhood $O_x$ of $x$ the set $\{n\in\w:O_x\cap F_n\ne\emptyset\}$ is infinite;
\item a ({\em strict})  {\em $\Cld^\w$-fan} in $X$ if it is (strictly) compact-finite and accumulates at some point $x\in X$.
\end{itemize}
A subset $U$ of a topological space $X$ is called a {\em functionally open neighborhood} of a set $A\subseteq X$ if there exists a continuous function $f:X\to[0,1]$ such that $f(F)\subseteq\{0\}$ and $f(O\setminus A)\subseteq\{1\}$.

The following characterization of sequentially Ascoli spaces was proved in \cite[3.3.1, 2.9.6]{BanFan} (for the equivalence $(1)\Leftrightarrow(2)$, see also \cite[2.1]{GKP} and \cite{Gab2}).

\begin{theorem}\label{t:A} 
For a topological space $X$ the following conditions are equivalent:
\begin{enumerate}\itemsep=2pt
\item is sequentially Ascoli;
\item contains no strict $\Cld^\w$-fans.
\end{enumerate}
If $X$ is a normal $\aleph$-space, then the conditions \textup{(1), (2)} are equivalent to 
\begin{enumerate}
\item[(3)] $X$ contains no $\Cld^\w$-fans.
\end{enumerate}
\end{theorem}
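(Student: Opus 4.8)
The plan is to establish the unconditional equivalence $(1)\Leftrightarrow(2)$ by translating back and forth between null-sequences in $C_k(X)$ and compact-finite families of level sets, and then to handle $(2)\Leftrightarrow(3)$ separately, where only $(2)\Rightarrow(3)$ uses the extra hypotheses. For $(2)\Rightarrow(1)$ I argue contrapositively: assume $X$ is not sequentially Ascoli, so there is a convergent sequence $K=\{f_n\}_{n\in\w}\cup\{f_\infty\}\subseteq C_k(X)$ with $f_n\to f_\infty$ at which the evaluation $e\colon K\times X\to\IR$ is discontinuous. Replacing $f_n$ by $f_n-f_\infty$ (a homeomorphism of $K$, and $(h,x)\mapsto f_\infty(x)$ is continuous) I may assume $f_\infty=0$, so that $f_n\to 0$ \emph{uniformly on compacta}. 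Discontinuity of $e$ can only occur at a point $(0,x_0)$, and it yields an $\e>0$ such that for every neighborhood $V\ni x_0$ and every $N$ there is $n\ge N$ and $x\in V$ with $|f_n(x)|\ge\e$. I then set $F_n=\{x\in X:|f_n(x)|\ge\e\}$. These are closed; since $f_n\to 0$ uniformly on each compact $K$, eventually $\sup_K|f_n|<\e/2$, so $K\cap U_n=\emptyset$ for the cozero sets $U_n=\{x:|f_n(x)|>\e/2\}\supseteq F_n$ — this witnesses strict compact-finiteness. The quantified discontinuity says exactly that $\{n:F_n\cap V\ne\emptyset\}$ is infinite for each $V\ni x_0$, i.e. $(F_n)$ accumulates at $x_0$. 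Thus $(F_n)$ is a strict $\Cld^\w$-fan, contradicting $(2)$.

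For $(1)\Rightarrow(2)$ I again argue contrapositively. Given a strict $\Cld^\w$-fan $(F_n)$ accumulating at $x_0$, with functionally open neighborhoods $U_n\supseteq F_n$ forming a compact-finite family, use the defining Urysohn functions to pick continuous $h_n\colon X\to[0,1]$ with $h_n{\restriction}F_n\equiv 1$ and $h_n{\restriction}(X\setminus U_n)\equiv 0$ (discarding empty $F_n$ and reindexing). Put $g_n=h_n\in C_k(X)$. Because $g_n$ vanishes off $U_n$ and $(U_n)$ is compact-finite, for each compact $K$ one has $g_n{\restriction}K\equiv 0$ for all large $n$, so $g_n\to 0$ in $C_k(X)$ and $S=\{g_n:n\in\w\}\cup\{0\}$ is a convergent sequence. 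Finally, any neighborhood $W\times V$ of $(0,x_0)$ in $S\times X$ satisfies $W\supseteq\{g_n:n\ge N\}$ for some $N$; by accumulation there is $n\ge N$ and $x\in F_n\cap V$, whence $g_n(x)=1$. So the evaluation is discontinuous at $(0,x_0)$ and $X$ is not sequentially Ascoli.

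The implication $(3)\Rightarrow(2)$ is immediate, since a strictly compact-finite family is compact-finite, so every strict $\Cld^\w$-fan is a $\Cld^\w$-fan. The real content is $(2)\Rightarrow(3)$, which by the equivalence $(1)\Leftrightarrow(2)$ and the previous paragraph reduces to the following \emph{Expansion Lemma}: in a normal $\aleph$-space, every compact-finite family $(F_n)_{n\in\w}$ of closed sets admits a compact-finite family of cozero sets $U_n\supseteq F_n$ (one then keeps the same accumulation point, turning a $\Cld^\w$-fan into a strict one). To prove it, fix a $\sigma$-locally finite $k$-network $\mathcal{N}=\bigcup_{m\in\w}\mathcal{N}_m$ with closed members and $\mathcal{N}_m\subseteq\mathcal{N}_{m+1}$. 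For each $n$ set $Z_n=\bigcup\{N\in\bigcup_{m\le n}\mathcal{N}_m:N\cap F_n=\emptyset\}$; this is a locally finite union of closed sets, hence closed, and is disjoint from $F_n$, so by normality and Urysohn's lemma there is a cozero set $U_n$ with $F_n\subseteq U_n\subseteq X\setminus Z_n$. Given a compact $K$, cover it by finitely many network members $N_1,\dots,N_r$ (using that a $k$-network covers compacta), say all of layer $\le M$; one checks that for $n>M$ a point of $K\cap U_n$ lies in some $N_i$ with $N_i\cap F_n\ne\emptyset$, which is the mechanism forcing finiteness.

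The main obstacle is precisely the last step: a fixed $k$-network member $N_i$ is in general \emph{not compact}, so a priori it could meet $F_n$ for infinitely many $n$, and the naive construction does not yet bound $\{n:K\cap U_n\ne\emptyset\}$. Overcoming this is where the full strength of ``normal $\aleph$-space'' must enter. I expect to resolve it by exploiting that every compact subset of an $\aleph$-space is metrizable (via its $G_\delta$-diagonal): if some compact $K$ met $U_n$ for infinitely many $n$, one extracts points $x_n\in K\cap U_n$ lying in a single network member $N^*$ of a finite cover of the convergent sequence they determine, passes to a convergent subsequence $x_n\to x^*\in N^*\cap K$, and uses a sufficiently fine, layered choice of the cozero expansion $U_n$ (shrinking $U_n$ inside a controlled neighborhood of $F_n$) to force the limit configuration into a compact set meeting infinitely many $F_n$ — contradicting compact-finiteness of $(F_n)$. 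Making this localization precise, so that closeness of $x_n$ to $F_n$ is genuinely controlled despite the network members being unbounded, is the delicate technical heart of the argument.
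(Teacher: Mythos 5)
Your unconditional equivalence $(1)\Leftrightarrow(2)$ is correct and complete: reducing to $f_\infty=0$, taking $F_n=\{x:|f_n(x)|\ge\e\}$ with the functionally open expansions $U_n=\{x:|f_n(x)|>\e/2\}$, and conversely turning a strict $\Cld^\w$-fan into a null sequence of Urysohn functions with discontinuous evaluation, is exactly the standard argument. Note that the paper does not prove Theorem~\ref{t:A} at all: it quotes it from \cite[3.3.1, 2.9.6]{BanFan} (and \cite[2.1]{GKP}, \cite{Gab2} for $(1)\Leftrightarrow(2)$), so there is no internal proof to compare with; your first half matches the cited arguments, and your observation that $(3)\Rightarrow(2)$ is formal is also fine.

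The genuine gap is in $(2)\Rightarrow(3)$, and you have in effect conceded it. The entire content of the ``normal $\aleph$-space'' hypothesis is your Expansion Lemma, and your proof stops precisely where that hypothesis has to do its work. Concretely: with $Z_n=\bigcup\{N\in\bigcup_{m\le n}\mathcal N_m:N\cap F_n=\emptyset\}$ and $F_n\subseteq U_n\subseteq X\setminus Z_n$, the attempted verification that $(U_n)_{n\in\w}$ is compact-finite only yields, from a compact $K$ meeting infinitely many $U_n$, a single network member $N^*$ of fixed level that meets infinitely many $F_n$; since $N^*$ need not be compact, this contradicts nothing. Your proposed repair cannot work as described: the points $x_{n_k}\in K\cap U_{n_k}$ lie in the expansions, not in the sets $F_{n_k}$, and in a non-metrizable space there is no notion of $x_{n_k}$ being ``close to'' $F_{n_k}$ --- the only link between $U_n$ and $F_n$ is through the network members, which are exactly the non-compact sets causing the failure. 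Moreover, a compact set $\{x^*\}\cup\{x_{n_k}:k\in\w\}$ meeting infinitely many $U_{n_k}$ is precisely the configuration whose impossibility the lemma asserts, so invoking it to produce a contradiction is circular; no choice of ``sufficiently fine, layered'' cozero sets is specified that would break this circle. As it stands, the implication $(2)\Rightarrow(3)$ --- equivalently, that a $\Cld^\w$-fan in a normal $\aleph$-space yields a strict one --- is unproven; this is exactly the part for which the paper cites \cite[2.9.6]{BanFan}, and closing it requires either an actual proof of your expansion property or a construction of a different strict fan (not merely an expansion of the given family), neither of which your sketch supplies.
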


The class of $\aleph$-spaces, appearing in Theorem~\ref{t:A} was introduced by O'Meara  \cite{OM} and is one of many known classes of generalized metric spaces, which are defined with the help of  networks.

\smallskip

A family $\mathcal N$ of subsets of a topological space $X$ is  called
\begin{itemize}\itemsep=2pt
\item a {\em network} if for each open set $U\subseteq X$ and point $x\in U$ there exists a set $N\in\mathcal N$ such that $x\in N\subseteq U$;
\item a {\em $k$-network} if for each open set $U\subseteq X$ and compact set  $K\subseteq U$ there exists a finite subfamily $\mathcal F\subseteq\mathcal N$ such that $K\subseteq\bigcup\mathcal F\subseteq U$;
\item a {\em Pytkeev network} if $\mathcal N$ is a network and for any open set $U\subseteq X$, a subset $A\subseteq X$ and point $x\in U\cap\bar A\setminus A$,  there exists a set $N\in\mathcal N$ such that $N\subseteq U$ and $N\cap A$ is infinite;
\item an {\em $\mathsf{ap}$-network} if for each open set $U\subseteq X$ and a sequence  $\{x_n\}_{n\in\w}\subseteq X$ of points  that accumulate at a point $x\in U$ there exists a set $N\in\mathcal N$ such that $ N\subseteq U$ and the set $\{n\in\w:x_n\in N\}$ is infinite;
\item an {\em $\mathsf{as}$-network} if  for each open set $U\subseteq X$ and a sequence $(S_n)_{n\in\w}$ of closed subsets of $X$ that accumulates at a point $x\in U$ there exists a set $N\in\mathcal N$ such that $N\subseteq U$ and the set $\{n\in \w:N\cap S_n\ne\emptyset\}$ is infinite.
\end{itemize}
The prefixes $\cs$, $\ap$, $\as$ are abbreviations of ``$\mathsf c$onvergent $\mathsf s$equence'', ``$\mathsf a$ccumulating sequence of $\mathsf p$oints'',  and ``$\mathsf a$ccumulating sequence of closed $\mathsf s$ets''.


A regular topological space $X$ is called
\begin{itemize}\itemsep=2pt
\item an {\em $\aleph_0$-space} if $X$ has a countable $k$-network;
\item an {\em $\aleph$-space} if $X$ has a $\sigma$-discrete $k$-network;
\smallskip
\item {\em cosmic} if $X$ has a countable network;
\item {\em $\cs$-cosmic} if $X$ has a countable $\cs$-network;
\item {\em $\mathsf{ap}$-cosmic} if $X$ has a countable $\mathsf{ap}$-network;
\item {\em $\mathsf{as}$-cosmic}  if $X$ has a countable $\mathsf{as}$-network;
\smallskip
\item a {\em $\sigma$-space} if $X$ has a $\sigma$-discrete network;
\item a {\em $\sigma_{\cs}$-space}  if $X$ has a $\sigma$-discrete $\cs$-network;
\item a {\em $\sigma_{\mathsf{ap}}$-space}  if $X$ has a $\sigma$-discrete $\ap$-network;
\item a {\em $\sigma_{\mathsf{as}}$-space} if $X$ has a $\sigma$-discrete $\as$-network.
\end{itemize}

The classes of cosmic spaces and $\aleph_0$-spaces are two well-studied classes of generalized metric spaces, introduced by Michael \cite{Mich} and considered in the surveys of Gruenhage \cite{Grue}, \cite{Grue2}, \cite{Tan}; $\aleph$-spaces  were introduced and studied by O'Meara \cite{OM} in his dissertation (see also \cite{Foged} and \cite{Grue}). It can be shown (see \cite{Foged}) that a $\sigma$-discrete (more generally, compact-countable) family of sets is a $k$-network if and only if it is a $\cs$-network. This implies that a topological space is an $\aleph_0$-space (resp. an $\aleph$-space) if and only if it is $\cs$-cosmic (res. a $\sigma_\cs$-space). 

The class of $\mathsf{ap}$-cosmic spaces coincides with the class of $\mathfrak P_0$-spaces of Banakh in \cite{Ban15} and the class of $\sigma_{\mathsf{ap}}$-spaces (properly) contains the class of $\mathfrak P$-spaces of Gabriyelyan and K\c akol \cite{GK}. 

The classes of $\mathsf{as}$-cosmic spaces and $\sigma_{\mathsf{as}}$-spaces are new and are introduced in this paper with purpose to find in the class of cometrizable spaces a subclass of spaces defined by suitable network properties. 

\begin{theorem}\label{t:New} Each $\cs$-network $\mathcal N$ in a $\cs$-Ascoli normal $\aleph$-space $X$ is an $\ap$-network. Consequently, every $\cs$-Ascoli normal $\aleph$-space $X$ is a $\sigma_{\ap}$-space.
\end{theorem}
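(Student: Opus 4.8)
The plan is to argue by contradiction, manufacture a $\Cld^\w$-fan, and invoke Theorem~\ref{t:A}. Suppose $\mathcal N$ is a $\cs$-network in $X$ that fails to be an $\ap$-network. Then there are an open set $U_0\subseteq X$, a point $x\in U_0$, and a sequence $\{x_n\}_{n\in\w}$ accumulating at $x$ such that $\{n\in\w:x_n\in N\}$ is finite for every $N\in\mathcal N$ with $N\subseteq U_0$. First I would use the regularity of $X$ to fix an open set $U$ with $x\in U\subseteq\overline U\subseteq U_0$. Since $\{x_n\}_{n\in\w}$ accumulates at $x\in U$, infinitely many of its terms lie in $U$, and after passing to this subsequence I may assume that $x_n\in U$ for all $n$ while retaining accumulation at $x$. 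Discarding any value attained infinitely often (such a value $z$ would give the convergent constant sequence $z\to z\in\overline U\subseteq U_0$, hence by the $\cs$-network property a set $N\in\mathcal N$ with $N\subseteq U_0$ and $\{n:x_n=z\}\subseteq\{n:x_n\in N\}$ infinite, contradicting the choice of $U_0$), I may also assume the points $x_n$ are pairwise distinct.

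The heart of the argument is the claim that every compact set $K\subseteq X$ contains only finitely many of the $x_n$. Here I would use that $X$, being an $\aleph$-space, is a $\sigma$-space and therefore has a $G_\delta$-diagonal, so every compact subset of $X$ is metrizable (a compact space with $G_\delta$-diagonal is metrizable). Thus if some compact $K$ contained infinitely many $x_n$, metrizability would yield a subsequence $\{x_{n_j}\}_{j\in\w}$ converging to some $y\in K$; as all $x_{n_j}\in U$, the limit satisfies $y\in\overline U\subseteq U_0$. Since $U_0$ is open and $\mathcal N$ is a $\cs$-network, there is $N\in\mathcal N$ with $y\in N\subseteq U_0$ containing $x_{n_j}$ for all large $j$, whence $\{n:x_n\in N\}$ is infinite with $N\subseteq U_0$, contradicting the choice of $U_0$. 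This proves the claim, and it is precisely the place where I expect the main obstacle to lie: it is the step where regularity (pushing the limit $y$ into the open set $U_0$, where the $\cs$-network is allowed to act) and metrizability of compact subsets must be combined correctly.

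With the claim in hand the conclusion is immediate. The singletons $F_n:=\{x_n\}$ are closed (as $X$ is $T_1$), and the claim says exactly that $(F_n)_{n\in\w}$ is compact-finite; moreover $(F_n)_{n\in\w}$ accumulates at $x$, since every neighborhood of $x$ contains infinitely many $x_n$. Hence $(F_n)_{n\in\w}$ is a $\Cld^\w$-fan in $X$. But $X$ is a normal $\aleph$-space that is $\cs$-Ascoli, so by Theorem~\ref{t:A} it contains no $\Cld^\w$-fans, a contradiction. Therefore $\mathcal N$ is an $\ap$-network, which proves the first assertion. For the ``consequently'' part, recall that an $\aleph$-space is the same as a $\sigma_\cs$-space, so $X$ carries a $\sigma$-discrete $\cs$-network $\mathcal N_0$; by the first part $\mathcal N_0$ is an $\ap$-network, and being $\sigma$-discrete it witnesses that $X$ is a $\sigma_{\ap}$-space.
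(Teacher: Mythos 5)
Your proof is correct, and it rests on the same two pillars as the paper's argument: Theorem~\ref{t:A} (a normal $\cs$-Ascoli $\aleph$-space contains no $\Cld^\w$-fans) and the metrizability of compact subsets of $\aleph$-spaces, combined with an application of the $\cs$-network property to a convergent subsequence. The differences are in direction and in strength. The paper argues directly rather than by contradiction, and it works with a sequence of \emph{closed sets} $(F_n)$ accumulating at $x\in U$: since $(F_n)$ cannot be a $\Cld^\w$-fan it is not compact-finite, so some compact $K$ meets infinitely many $F_n$; choosing $x_n\in K\cap F_n$ and a convergent subsequence, the $\cs$-network property produces the required $N$. This establishes the stronger $\as$-network property (which is what the curved arrows in Diagram~\ref{diag} assert, and is what the paper's proof actually proves despite the statement reading ``$\ap$''), whereas your singleton-fan argument yields exactly the $\ap$ version as stated; to recover the $\as$ case you would additionally replace each $F_n$ by $F_n\cap\overline U$ and choose points there. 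On the other hand, your preprocessing step --- fixing $U$ with $x\in U\subseteq\overline U\subseteq U_0$ by regularity and arranging that all the points lie in $U$ --- is precisely what guarantees that the limit $y$ of the extracted convergent subsequence belongs to $\overline U\subseteq U_0$, so that the $\cs$-network property may legitimately be invoked with target $U_0$. The paper's own proof glosses over this point: its convergent subsequence lives in an arbitrary compact set $K$, whose limit need not belong to $U$, so its appeal to the $\cs$-network property needs exactly the repair your regularity step supplies. In that respect your write-up is more careful than the published argument, and you correctly identified this as the delicate step.
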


\begin{proof} 
To show that the $\cs$-network $\mathcal N$ is an $\as$-network, fix an open set $U\subseteq X$ and a sequence $(F_n)_{n\in \w}$ of closed subsets of $X$ that accumulates at some point $x\in U$. By Theorem~\ref{t:A}, the $\cs$-Ascoli $\aleph$-space $X$ contains no $\Cld^\w$-fans. Consequently, the sequence $(F_n)_{n\in\w}$ is not a $\Cld^\w$-fan and hence it is not compact-finite. So, we can find a compact set $K\subseteq X$ such that the set $\Omega=\{n\in\w:K\cap F_n\ne\emptyset\}$ is infinite. For each $n\in\Omega$ choose a point $x_n\in K\cap F_n$. Since compact sets in $\aleph$-spaces are metrizable (by \cite[2.4, 4.6]{Grue}), the sequence $(x_n)_{n\in\Omega}$ has a convergent subsequence $(x_{n_k})_{k\in\w}$. Since $\mathcal N$ is a $\cs$-network, there exists a set $N\in\mathcal N$ such that $N\subseteq U$ and the set $\{k\in\w:x_{n_k}\in N\}\subseteq \{k\in \w:N\cap F_{n_k}\ne\emptyset\}$ is infinite. Then the set $\{n\in\w:N\cap F_n\ne\emptyset\}$ is infinite too, witnessing that $\mathcal N$ is an $\as$-network for $X$. 
\end{proof} 

Let us also formulate a corollary of Theorem~\ref{t:S} and a result of Foged \cite{Foged} (see \cite[11.4]{Grue}) (saying that each Fr\'echet-Urysohn $\aleph$-space is stratifiable).

\begin{proposition}\label{p:FU} Each Fr\'echet-Urysohn $\aleph$-space is stratifiable and hence cometrizable.
\end{proposition}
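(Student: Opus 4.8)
The plan is to obtain the statement as an immediate combination of two ingredients that are already available to us. The first is Foged's theorem \cite{Foged} (see also \cite[11.4]{Grue}), asserting that every Fr\'echet-Urysohn $\aleph$-space is stratifiable; the second is Theorem~\ref{t:S} of Gartside and Reznichenko, asserting that every stratifiable space is cometrizable. So I would first invoke Foged's result to upgrade the hypothesis ``Fr\'echet-Urysohn $\aleph$-space'' to ``stratifiable space'', and then feed the resulting stratifiable space into Theorem~\ref{t:S} to conclude that it is cometrizable. The two implications
$$\text{Fr\'echet-Urysohn }\aleph\text{-space}\ \Ra\ \text{stratifiable}\ \Ra\ \text{cometrizable}$$
chain together to give exactly the claim, with the intermediate ``stratifiable'' assertion also recorded as part of the statement.

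Since both links in this chain are cited results, there is no real obstacle at the level of the proposition itself: the entire substance is absorbed into Foged's theorem, which I am permitted to treat as a black box. The only thing to verify is that the hypotheses line up correctly, namely that Foged's theorem applies to the regular spaces under consideration (our $\aleph$-spaces are regular by definition) and that the notion of stratifiability it produces is the same one demanded by Theorem~\ref{t:S}; both checks are routine.

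If one instead wished to give a self-contained argument, the genuine work would lie in reproving Foged's implication. Starting from a $\sigma$-discrete $k$-network $\mathcal N$ (equivalently, a $\sigma$-discrete $\cs$-network, by the identification of $\aleph$-spaces with $\sigma_\cs$-spaces noted earlier) together with the Fr\'echet-Urysohn property, one must manufacture for each point $x$ a decreasing sequence of open neighborhoods $(U_n(x))_{n\in\w}$ satisfying the stratification identity $F=\bigcap_{n\in\w}\overline{U_n[F]}$ for every closed $F\subseteq X$. The Fr\'echet-Urysohn hypothesis is what lets one detect membership in the closures $\overline{U_n[F]}$ through convergent sequences, while the $\sigma$-discreteness of the network keeps the neighborhood assignment coherent across the levels $n$; that interplay is the substantive step, and it is precisely what Foged carried out. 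For the purposes of this proposition, however, I expect to simply cite it.
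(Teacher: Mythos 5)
Your proposal matches the paper exactly: the paper states Proposition~\ref{p:FU} as an immediate corollary of Foged's theorem (each Fr\'echet-Urysohn $\aleph$-space is stratifiable, cited as \cite{Foged} and \cite[11.4]{Grue}) combined with Theorem~\ref{t:S} of Gartside and Reznichenko (each stratifiable space is cometrizable), which is precisely your two-step chain. Your additional remarks about hypothesis matching and what a self-contained proof would require are fine but not needed; the citation-based argument is the intended proof.
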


The location of cometrizable spaces among other generalized metric spaces is shown in Diagram~\ref{diag} holding for any regular topological space. By simple arrows we denote the implications that hold under some additional assumptions (written at the arrows).
}
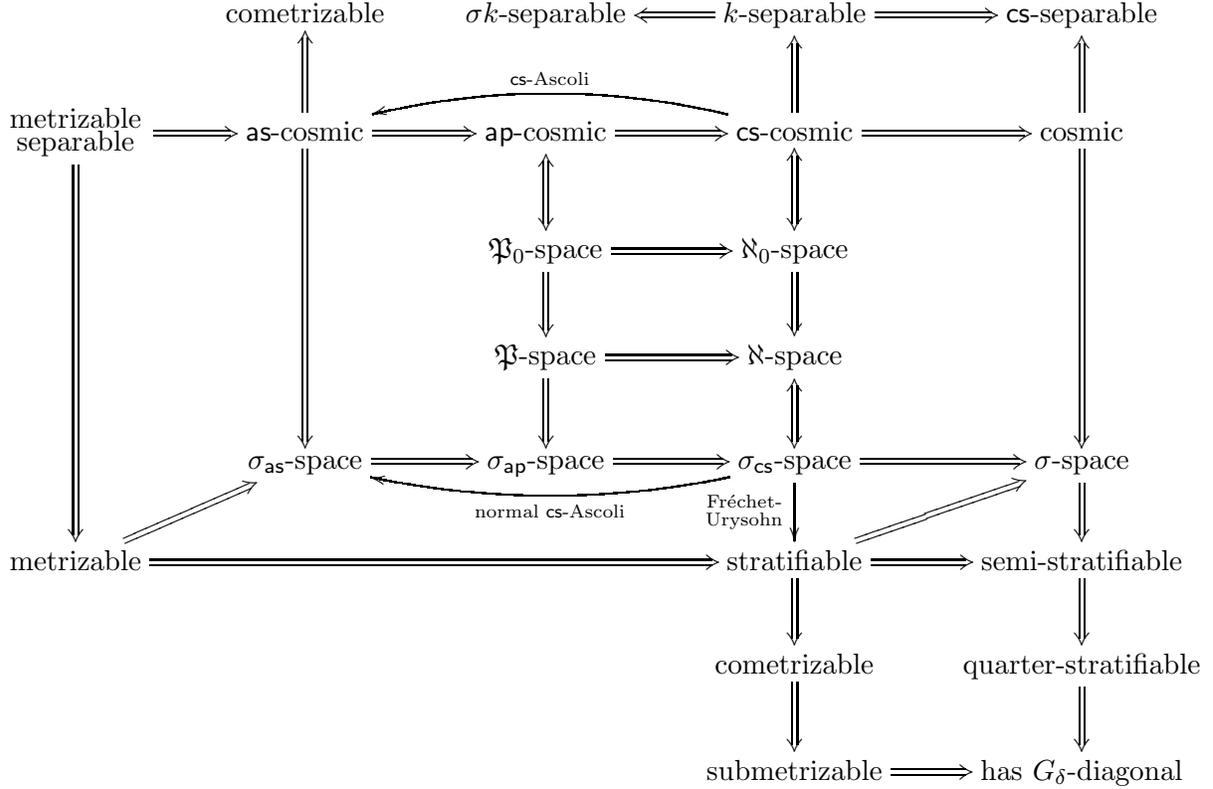
\begin{figure}[h]
$$
\xymatrix{
&\mbox{cometrizable}&\mbox{$\sigma k$-separable}&\mbox{$k$-separable}\ar@{=>}[r]\ar@{=>}[l]&\mbox{$\cs$-separable}\\
\mbox{metrizable}\atop\mbox{separable}\ar@{=>}[r]\ar@{=>}[dddd]
&\mbox{$\as$-cosmic}\ar@{=>}[r]\ar@{=>}[ddd]\ar@{=>}[u]&\mbox{$\ap$-cosmic}\ar@{=>}[r]\ar@{<=>}[d]&\mbox{$\cs$-cosmic}\ar@{=>}[r]\ar@{<=>}[d]\ar@{=>}[u]\ar@/_15pt/_{\mbox{\tiny $\cs$-Ascoli}}[ll]&\mbox{cosmic}\ar@{=>}[ddd]\ar@{=>}[u]\\
&&\mbox{$\mathfrak P_0$-space}\ar@{=>}[d]\ar@{=>}[r]&\mbox{$\aleph_0$-space}\ar@{=>}[d]&\\
&&\mbox{$\mathfrak P$-space}\ar@{=>}[d]\ar@{=>}[r]&\mbox{$\aleph$-space}\ar@{<=>}[d]&\\
&\mbox{$\sigma_{\mathsf{as}}$-space}\ar@{=>}[r]&\mbox{$\sigma_{\mathsf{ap}}$-space}\ar@{=>}[r]&\mbox{$\sigma_{\cs}$-space}\ar@{=>}[r]\ar@/^13pt/^{\mbox{\tiny normal $\cs$-Ascoli}}[ll]\ar_{\tiny \mbox{Fr\'echet-}\atop\mbox{Urysohn}}[d]&\mbox{$\sigma$-space}\ar@{=>}[d]\\
\mbox{metrizable}\ar@{=>}[ru]\ar@{=>}[rrr]&&&\mbox{stratifiable}\ar@{=>}[ur]\ar@{=>}[r]\ar@{=>}[d]&\mbox{semi-stratifiable}\ar@{=>}[d]\\
&&&\mbox{cometrizable}\ar@{=>}[d]&\mbox{quarter-stratifiable}\ar@{=>}[d]\\
&&&\mbox{submetrizable}\ar@{=>}[r]&\mbox{has $G_\delta$-diagonal}
}
$$
\caption{Location of cometrizable spaces among other generalized metric spaces}\label{diag}
\end{figure}
The non-trivial (or not discussed sofar) implications of this diagram are established in the following theorem that will be proved in Section~\ref{s:main}.

\begin{theorem} \label{t:main}
\begin{enumerate}\itemsep=2pt
\item Each cosmic space is $\cs$-separable.
\item Each ${\cs}$-cosmic space is $\mathsf k$-separable.
\item Each $\mathsf{as}$-cosmic space is cometrizable.
\end{enumerate}
\end{theorem}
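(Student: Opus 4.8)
The plan is to handle the three items in increasing order of difficulty, items (1) and (2) being short and item (3) carrying the real content. For (1), I would close the given countable network $\mathcal N$ under finite intersections (this keeps it countable) and pick one point $d_N\in N$ in each nonempty $N\in\mathcal N$; put $D=\{d_N\}$. Given $x$, the subfamily $\{N\in\mathcal N:x\in N\}$ is countable and, after replacing initial segments by their intersections, can be listed as a decreasing sequence $(M_i)$ with $x\in M_i$; then $d_{M_i}\to x$, since every neighborhood of $x$ contains some network member $M_k$ and hence all $M_i\subseteq M_k$ for $i\ge k$. Thus $D$ is sequentially dense. For (2) I would run the analogous argument with a countable $\cs$-network (equivalently a countable $k$-network, as noted in the excerpt) closed under finite unions: fix a countable $D$ that meets the network densely and, given compact $K$, enlarge $K$ to a compact $C$ by adjoining convergent sequences from $D$ aimed into the finitely many network pieces that cover $K$, so that $K\subseteq\overline{C\cap D}$; metrizability of compacta in $\aleph_0$-spaces keeps $C$ compact.

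For (3) the target is a weaker metrizable topology $\tau_d\subseteq\tau$ on $X$ for which every point has a $\tau$-neighborhood base of $\tau_d$-closed sets; equivalently, for each $\tau$-open $U$ and $x\in U$ a $\tau$-neighborhood $V$ of $x$ with $\overline{V}^{\tau_d}\subseteq U$. Since an $\as$-network is in particular a network, $X$ is cosmic, hence regular and perfectly normal, and I would first record submetrizability: a countable family of Urysohn functions separating the closures $\overline{N}^{\tau}$ of members $N\in\mathcal N$ yields a weaker separable metric topology. The real task is to arrange $\tau_d$ so that an entire neighborhood, and not merely a point, can be made $\tau_d$-closed.

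Guided by the sequential fan $S_\w$—where the basic neighborhoods of the apex are themselves closed in the product metric coming from the rib-coordinate functions—I would manufacture from the countable $\as$-network a countable family of $\tau$-continuous functions $g_k:X\to[0,1]$ encoding the network elements (each $g_k$ vanishing on $\overline{N_k}^{\tau}$ and recording the depth of approach of convergent sequences toward $N_k$), and let $\tau_d$ be generated by $\{g_k\}$ together with the separating functions. For $x\in U$ I would then seek the closed neighborhood as a countable intersection $V=\bigcap_k\{g_k\le c_k\}$ of sublevel sets, for a carefully chosen profile $(c_k)$; any such $V$ is automatically $\tau_d$-closed, being a countable intersection of $\tau_d$-closed sets.

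The crux—and the point where the full strength of the $\as$-network (accumulating sequences of closed sets, not merely of points or of convergent sequences) is indispensable—is to show that $(c_k)$ can be chosen so that $V$ is simultaneously a $\tau$-neighborhood of $x$ and contained in $U$. I expect to argue by contradiction: if no neighborhood $V$ of $x$ satisfied $\overline{V}^{\tau_d}\subseteq U$, one would extract a sequence $(F_n)$ of $\tau$-closed sets accumulating at $x\in U$ such that no $N\in\mathcal N$ with $N\subseteq U$ meets infinitely many $F_n$, directly contradicting that $\mathcal N$ is an $\as$-network (a $\Cld^\w$-fan-type obstruction). The main obstacle I anticipate is producing a \emph{genuine} $\tau$-accumulating sequence of closed sets out of the metric data: because $\tau_d$-convergence is strictly weaker than $\tau$-convergence, accumulation cannot be read off from the metric directly, and one has to bridge this gap—using perfect normality to write the obstructing open sets as increasing unions of $\tau$-closed sets, and the countable tightness of cosmic spaces to thin these down to a single sequence of closed sets that clusters at $x$ while escaping each network element eventually. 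Making this extraction work is, I expect, the heart of the proof and the reason the $\as$-condition was isolated.
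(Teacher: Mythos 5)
Your argument for item (1) is correct, and it is a genuinely different (and more self-contained) route than the paper's, which instead writes the cosmic space as a continuous image of a separable metric space and pushes forward a countable dense subset. Item (2), however, has a real gap: the step ``enlarge $K$ to a compact $C$ by adjoining convergent sequences from $D$ aimed into the finitely many network pieces that cover $K$'' cannot be carried out as stated, because $\cs$-network ($k$-network) elements are not open, so a sequence converging to a point of $K$ need not ever enter a prescribed network element containing that point, let alone one containing $K$. Compactness of $C=K\cup\bigcup_n S_n$ requires that every open set containing $K$ contain all but finitely many of the adjoined points, and nothing in your construction enforces this; the phrase ``metrizability of compacta in $\aleph_0$-spaces keeps $C$ compact'' is not a justification, since metrizability of $K$ says nothing about the enlargement. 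The paper avoids this exact difficulty by quoting Michael's theorem that every $\aleph_0$-space is the image of a separable metric space under a \emph{compact-covering} map: the enlargement is performed in the metric preimage, where the control $d(c_n,c_{n,k})<2^{-n-k}$ makes compactness of $\tilde C$ obvious, and the image $f(\tilde C)$ is the required compact set. Your sketch needs this theorem, or work of comparable depth, to close.

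Item (3), the main point, is not a proof but a program whose central step you explicitly leave open (``I expect to argue by contradiction\dots Making this extraction work is, I expect, the heart of the proof''). Concretely, two things are missing. First, a countable intersection $V=\bigcap_k\{g_k\le c_k\}$ is automatically $\tau_d$-closed, but there is no reason for it to be a $\tau$-neighborhood of $x$, and your setup offers no mechanism producing one. Second, the intended contradiction requires extracting, from the failure of \emph{all} (continuum many) profiles $(c_k)$, a single sequence $(F_n)$ of closed sets accumulating at $x$ such that each of the countably many $N\in\mathcal N$ with $N\subseteq U$ meets only finitely many $F_n$; this simultaneous diagonalization against all candidate $N$ is exactly the missing content, and it is not clear it can be done at all in your framework. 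The paper proceeds along an entirely different route that bypasses both obstacles: it endows $C(X,\IR)$ with the second-countable topology $\tau$ generated by the sets $[N,B]$ and their complements ($N\in\mathcal N$, $B$ in a countable base of $\IR$), takes the separable family $\K$ of compact subsets of $C_\tau(X)$ consisting of finite sets and convergent sequences having dense intersection with a fixed countable dense set, shows that the Dirac map $\delta:X\to C_\K(C_\tau(X))$ is a topological embedding, and concludes by Theorem~\ref{t:Ck} together with the heredity of cometrizability (Proposition~\ref{p:sub}). There the $\as$-property is used in exactly one place: to prove that $\delta^{-1}([K,U])$ is open when $K\in\K$ is a convergent sequence of functions $f_n\to f_\infty$; the accumulating sequence of closed sets is simply $(X\setminus f_n^{-1}(U))_{n\in\w}$, handed over for free by the assumption that $\delta^{-1}([K,U])$ fails to be open at some of its points. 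So the theorem as stated is not established by your proposal.
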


Now we present some examples showing which implications cannot be added to the above diagram. Let us recall that the {\em Sorgenfrey line} is the real line endowed with the (first-countable) topology, generated by the base consisting of the half-intervals $[a,b)$ where $a<b$ are real numbers.  

\begin{example}\label{ex1} The Sorgenfrey line is cometrizable, first-countable, $\mathsf k$-separable and quarter-stratifiable, but not semi-stratifiable.
\end{example}

The cometrizability of the Sorgenfrey line is witnessed by the standard Euclidean topology of the real line. The $\mathsf k$-separability of the Sorgenfrey line is established in \cite{BGR} (see also Example 20 in \cite{GR}).  By Example 3.2 in \cite{Ban02}, the Sorgenfrey line is quarter-stratifiable but not semi-stratifiable. 

\begin{problem} Is each cometrizable space quarter-stratifiable?
\end{problem}

Our next example is more difficult and will be constructed in Section~\ref{s:ex}.

\begin{example}\label{ex} There exists a regular countable space $X$ of weight $\omega_1$, which is not cometrizable and hence not stratifiable. If $\w_1=\mathfrak c$, then the space $X$ contains no infinite compact sets and is $\cs$-cosmic. If $\w_1<\mathfrak p$, the space $X$ is Fr\'echet-Urysohn and is not $\cs$-cosmic.
\end{example}

The cardinal $\mathfrak p$ is defined as the smallest character of a countable space with a unique non-isolated point, which is not Fr\'echet-Urysohn. It is known that $\w_1\le\mathfrak p\le\mathfrak c$ and $\mathfrak p=\mathfrak c$ under Martin's Axiom, see \cite{vD}, \cite{Vau}, \cite{Blass}. 



Looking at Proposition~\ref{p:FU} and Theorems~\ref{t:New} and \ref{t:main}(3), it is natural to ask

\begin{problem} Is each (sequential) $\sigma_{\as}$-space cometrizable?
\end{problem}

\section{Proof of Theorem~\ref{t:Ck}}\label{s:Ck}

Given a separable family $\K$ of compact subsets of a topological space $X$ and a cometrizable space $Y$, we shall prove that the function space $C_\K(X,Y)$ is cometrizable.

The family $\K$, being separable, contains a countable subfamily $\mathcal D\subseteq\K$ such that for every $K\in\K$ there exists $\tilde K\in\K$ such that $K$ is contained in the closure of the set $\tilde K\cap\cup\mathcal D$. Then the density of $\bigcup\K$ in $X$ implies the density of the union $\bigcup\mathcal D$ in $X$.

 Since the space $Y$ is cometrizable, there exists a weaker metrizable topology $\tau$ on $Y$ such that for each open set $U\subseteq X$ and point $y\in U$ there exists an open neighborhood $V\subseteq Y$ of $y$ whose closure $\overline{V}^\tau$ in the topology $\tau$ is contained $U$. 
Denote by $Y_\tau$ the metrizable topological space $(Y,\tau)$.

By \cite[4.2.17]{Eng}, for every (compact) set $D\in\mathcal D$ the function space $C_{k}(D,Y_\tau)$ is metrizable. Since the union $\bigcup\mathcal D$ is dense in $X$, the map
$$r:C_\K(X,Y)\to\prod_{D\in\mathcal D}C_{k}(D,Y_\tau),\;\;r:f\mapsto (f{\restriction} D)_{D\in\mathcal D}$$
is injective.
Let $\sigma$ be the (metrizable) topology on $C_{k}(X,Y)$ such that the map $$r:(C_\K(X,Y),\sigma)\to \prod_{D\in\mathcal D}C_{k}(D,Y_\tau)$$ is a topological embedding. We claim that the topology $\sigma$ witnesses that the space $C_\K(X,Y)$ is cometrizable.

Fix any function $f\in C_\K(X,Y)$ and an open neighborhood $O_f\subseteq C_{\K}(X,Y)$. Without loss of generality, $O_f$ is of basic form $O_f=\bigcap_{i=1}^n[K_i,U_i]$ for some non-empty compact sets $K_1,\dots,K_n\in\K$ and some open sets $U_1,\dots,U_n\subseteq Y$. For every $i\le n$ and point $x\in K_i$, find a neighborhood $V_{f(x)}\subseteq Y$ of $f(x)\in U_i$ whose $\tau$-closure $\overline{V}^\tau_{f(x)}$ is contained in $U_i$. Using the regularity of the cometrizable space $Y$, find two open neighborhoods $N_{f(x)},W_{f(x)}$ of $f(x)$ such that $\overline{N}_{f(x)}\subseteq W_{f(x)}\subset\overline{W}_{f(x)}\subseteq V_{f(x)}$.

By the compactness of $K_i$ the open cover $\{f^{-1}(N_{f(x)}):x\in K_i\}$ of $K_i$ has a finite subcover $\{f^{-1}(N_{f(x)}):x\in F_i\}$ (here $F_i\subseteq K_i$ is a suitable finite subset of $K_i$). By choice of the family $\mathcal D$, for every $x\in F_i$, the compact set $K_{i,x}:=K_i\cap f^{-1}(\overline{N}_{f(x)})\in\K$ can be enlarged to a compact set $\tilde K_{i,x}\in \K$ such that $K_{i,x}$ is contained in the closure of the set $\tilde K_{i,x}\cap\bigcup\D$. Replacing the set $\tilde K_{i,x}$ by $\tilde K_{i,x}\cap f^{-1}(\overline{W}_{\!f(x)})$, we can assume that $f(\tilde K_{i,x})\subseteq\overline{W}_{\!f(x)}\subseteq V_{f(x)}$.

Consider the open neighborhood $$V_f=\bigcap_{i=1}^n\bigcap_{x\in F_i}[\tilde K_{i,x},V_{f(x)}]$$of $f$ in the function space $C_\K(X,Y)$. We claim that its $\sigma$-closure $\overline{V}^\sigma_f$ is contained in $O_f$. 

Given any function $g\notin O_f$, we should find a neighborhood $O_g\in\sigma$ of $g$ that does not intersect $V_f$. Since $g\notin O_f$, there exists $i\le n$ and a point $z\in K_i$ such that $g(z)\notin U_i$. Find a point $x\in F_i$ with $z\in K_{i,x}$. Taking into account that $\overline{V}^\tau_{f(x)}\subseteq U_i\subseteq Y\setminus\{g(z)\}$, we conclude that $g(z)\notin \overline{V}^\tau_{f(x)}$. Since $z\in K_{i,n}\subset\overline{\tilde K_{i,n}\cap\bigcup\D}$, the continuity of the function $g:X\to Y_\tau$ yields a point $d\in \tilde K_{i,n}\cap\bigcup \D$ such that $g(d)\notin  \overline{V}^\tau_{f(x)}$. Then $O_g:=[\{d\},Y\setminus \overline{V}^\tau_{f(x)}]\in\sigma$ is a required $\sigma$-open neighborhood of $g$ that is disjoint with the set  $V_f$.

\section{Proof of Theorem~\ref{t:main}}\label{s:main}

The three statements of Theorem~\ref{t:main} are proved in the following three lemmas.

\begin{lemma}\label{l1} Each cosmic space $X$ is $\cs$-separable.
\end{lemma}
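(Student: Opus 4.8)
The plan is to exploit Michael's characterization of cosmic spaces: a regular space is cosmic if and only if it is a continuous image of a separable metrizable space (this is the original result of Michael \cite{Mich} underlying the whole theory). Since, in the terminology of this paper, a cosmic space is regular by definition, I would fix a separable metrizable space $M$ together with a continuous surjection $f\colon M\to X$. The point of passing to $M$ is that $M$, being metrizable, is first countable, so convergent sequences are abundant in $M$ and can be transported to $X$ along the continuous map $f$.

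Concretely, let $D_0\subseteq M$ be a countable dense subset (which exists because $M$ is separable) and put $D:=f(D_0)$. This $D$ is a countable subset of $X$, and the claim is that it is sequentially dense. Indeed, fix an arbitrary point $x\in X$. By surjectivity choose $m\in M$ with $f(m)=x$; by the density of $D_0$ and the first countability of $M$ there is a sequence $(d_n)_{n\in\w}$ in $D_0$ converging to $m$. Continuity of $f$ then gives $f(d_n)\to f(m)=x$, and since every $f(d_n)$ lies in $D$, the point $x$ is the limit of a convergent sequence from $D$. As $x$ was arbitrary, $D$ witnesses that $X$ is $\cs$-separable.

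The only genuine ingredient is Michael's theorem, so the ``hard part'' is really bookkeeping: verifying that the map furnished by that theorem is an honest continuous surjection onto all of $X$, and that the transported sequence $(f(d_n))_{n\in\w}$ converges in $X$ (which is immediate, since $f^{-1}(U)$ is an open neighbourhood of $m$ for each open $U\ni x$, so $d_n\in f^{-1}(U)$ eventually). If one prefers a self-contained argument avoiding the citation, the same map can be built by hand from a countable network: replacing each member of the network by its closure yields a countable network of \emph{closed} sets (using regularity), and the standard construction of a continuous surjection from a separable metrizable space applies; the sequential-density argument above is then repeated verbatim.
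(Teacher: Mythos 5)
Your proof is correct and is essentially identical to the paper's: both invoke the Michael/Gruenhage characterization of cosmic spaces as continuous images of separable metrizable spaces, take the image of a countable dense set, and push a convergent sequence from the metrizable preimage forward by continuity. No gaps; the extra remarks about a self-contained construction are unnecessary but harmless.
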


\begin{proof} By \cite[4.9]{Grue}, the cosmic space $X$ is the image of a separable metrizable space $M$ under a continuous map $f:M\to X$. Let $D$ be any countable dense set in $M$. We claim that its image $f(D)$ is sequentially dense in $X$. Indeed, given any point $x\in X$, we can find a point $z\in M$ with $f(z)=x$ and choose a sequence $\{z_n\}_{n\in\w}\subseteq D$ that conveges to $z$. Then the sequence $\{f(z_n)\}_{n\in\w}\subseteq f(D)$ converges to $x$ in the space $X$.
\end{proof}

We recall that the class of $\cs$-cosmic spaces coincides with the class of $\aleph_0$-spaces.

\begin{lemma} Each $\cs$-cosmic space $X$ is $\mathsf k$-separable.
\end{lemma}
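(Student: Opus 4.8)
The plan is to recall the structural characterization of $\cs$-cosmic spaces as $\aleph_0$-spaces, i.e. spaces possessing a countable $k$-network, and to build a countable $k$-dense set directly from such a network together with a countable family of compact sets that "resolves" it. First I would fix a countable $k$-network $\mathcal N$ for $X$. The key observation is that an $\aleph_0$-space has a countable $cs$-network as well (the equivalence of $\aleph_0$-space and $\cs$-cosmic is stated in the excerpt), and more importantly every compact subset $K\subseteq X$ is coverable by finitely many members of $\mathcal N$. The goal is a countable set $D$ such that every compact $K$ sits inside a compact $C$ with $K\subseteq\overline{C\cap D}$.

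The core of the argument is to produce, for the countable network, a countable ``witnessing'' family of points. For each finite subfamily $\mathcal F\subseteq\mathcal N$ and each member $N\in\mathcal N$, I would like to select points inside $N$ that approximate the way compact sets are captured by the network. Concretely, I would enumerate all pairs $(N,\mathcal F)$ where $N\in\mathcal N$ and $\mathcal F$ is a finite subfamily with $N\subseteq\bigcup\mathcal F$, and for each such pair choose (when nonempty) a point of $N$; collecting all these choices gives a countable set $D$. The delicate point is to ensure that $D$ is not merely dense but genuinely $k$-dense, meaning each compact $K$ can be thickened to a compact $C$ whose intersection with $D$ is dense in $C$. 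To that end I would exploit that compact subsets of $\aleph_0$-spaces are metrizable (as used in the proof of Theorem~\ref{t:New}), so inside any compact $K$ one can work sequentially.

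The main obstacle I anticipate is the construction of the enlarging compact set $C\supseteq K$. Given a compact $K$, covering it by a finite subfamily of $\mathcal N$ gives local control, but one must produce a \emph{single} compact set $C$ containing $K$ together with a countable selection of points of $D$ accumulating onto all of $K$. I would handle this by taking, for each point $x\in K$ and each neighborhood coming from the network, a point of $D$ in that neighborhood, then forming $C$ as the union of $K$ with a convergent sequence (or countably many such sequences) assembled from these chosen points; compactness of $C$ follows because each added sequence converges into $K$, and metrizability of $K$ guarantees that countably many such sequences suffice to make $\overline{C\cap D}\supseteq K$. Verifying that $C$ is actually compact---i.e.\ that the union of $K$ with all the attached convergent sequences is closed and compact---is the technically careful step, and it is exactly where the $cs$-network property (capturing convergent sequences) rather than mere density of $\bigcup\mathcal N$ is essential.

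I would close by checking $k$-density: for an arbitrary compact $K$, the compact set $C$ just built satisfies $K\subseteq\overline{C\cap D}$ since every point of $K$ is a limit of selected points of $D$ lying in $C$, and $C$ is compact by the previous step. This establishes that $X$ is $\mathsf k$-separable, completing the lemma.
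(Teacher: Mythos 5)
Your proposal has two genuine gaps, and the central one is fatal to the whole strategy. The key unjustified step is your claim that $C=K\cup\bigcup_n S_n$ is compact ``because each added sequence converges into $K$''. This is false in general: compactness of such a union requires a \emph{uniformity}, namely that for every open $U\supseteq K$ all but finitely many of the attached points lie in $U$, and pointwise convergence of each $S_n$ into $K$ does not give this. Concretely, consider the Arens space $S_2=\{0\}\cup\{a_n:n\in\w\}\cup\{b_{n,k}:n,k\in\w\}$, where $b_{n,k}\to a_n$ as $k\to\infty$ and $a_n\to 0$ (the quotient of a sum of convergent sequences); this is a standard $\aleph_0$-space. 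Take $K=\{0\}\cup\{a_n:n\in\w\}$ and attach to each $a_n$ a sequence chosen from its column: the resulting set contains an infinite set meeting infinitely many columns, which is closed and discrete in $S_2$, so it is never compact, no matter how the attached sequences are thinned. Metrizability of $K$ is of no help here, because the attached points lie outside $K$ and the topology of $X$ near $K$ need not be metrizable. This missing uniformity is precisely why the paper does not argue intrinsically in $X$ at all: it invokes Michael's theorem that an $\aleph_0$-space is the image of a separable metric space $(M,d)$ under a \emph{compact-covering} map $f:M\to X$, lifts $K$ to a compact $C\subseteq M$, enlarges $C$ inside $M$ by points $c_{n,k}$ of a dense set with $d(c_n,c_{n,k})<2^{-(n+k)}$ (it is exactly this metric estimate that makes the enlarged set $\tilde C$ compact), and then pushes forward: $\tilde K=f(\tilde C)$. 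Your $k$-network approach has no substitute for that estimate.

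The second gap is your construction of $D$ itself: picking one point from each member of a countable $k$-network need not yield a $k$-dense set, nor even a set from which the required convergent sequences can be assembled. In the same space $S_2$, every nonempty open set contains isolated points, so from any countable $k$-network $\mathcal N$ one obtains another countable $k$-network $\{N\cup\{b_{n,k}\}:N\in\mathcal N,\ n,k\in\w\}$ all of whose members contain isolated points; an adversarial selection then gives $D\subseteq\{b_{n,k}:n,k\in\w\}$. But no sequence of isolated points of $S_2$ converges to $0$, and any compact $C\subseteq S_2$ meets the isolated points in at most finitely many columns plus a finite set, so for $K=\{0\}\cup\{a_n:n\in\w\}$ one gets $K\not\subseteq\overline{C\cap D}$ for \emph{every} compact $C\supseteq K$. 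Thus this $D$ is dense but not $k$-dense, even though $S_2$ is $k$-separable (a correct $D$ must pick up infinitely many of the $a_n$ themselves). So the countable set has to be chosen with more global control than ``one point per network element''; in the paper this control comes for free by taking $D=f(D_M)$ for a countable dense subset $D_M$ of the metric domain $M$.
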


\begin{proof} By \cite[p.494]{Grue}, the $\aleph_0$-space $X$ is the image of a separable metric space $(M,d)$ under a compact-covering map $f:M\to X$. The compact-covering property of $f$ means that each compact set $K\subseteq X$ coincides with the image $f(C)$ of some compact set $C\subseteq M$. Let $D$ be any countable dense set in $M$. We claim that its image $f(D)$ is $k$-dense in $X$. Indeed, given any compact set $K\subseteq X$, use the compact-covering property of $f$ to find a compact set $C\subseteq M$ with $f(C)=K$. Fix a countable dense set $\{c_n\}_{n\in\w}$ in $C$ and for every $n,k\in\w$ choose a point $c_{n,k}\in D$ such that $d(c_n,c_{n,k})<\frac1{2^{n+k}}$. It is easy to see that the set $\tilde C=C\cup\{c_{n,k}:n,k\in\w\}$ is compact and $D\cap \tilde C\supset\{x_{n,k}:n,k\in\w\}$ is dense in $\tilde C$. Then $\tilde K=f(\tilde C)$ is a compact set, containing $K$ and the set $\tilde K\cap f(D)\supseteq\{f(c_{n,k})\}_{n,k\in\w}$ is dense in $\tilde K$. This shows that the countable set $f(D)$ is $k$-dense in $X$ and the space $X$ is $\mathsf k$-separable.
\end{proof}

Our last lemma proves the (most difficult) third statement of Theorem~\ref{t:main}.

\begin{lemma}\label{l4} Each $\mathsf{as}$-cosmic space $X$ is cometrizable.
\end{lemma}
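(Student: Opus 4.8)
The plan is to exhibit a weaker separable metrizable topology $\tau$ on $X$ for which every point has an $X$-neighbourhood base consisting of $\tau$-closed sets; this is precisely cometrizability, witnessed by $f=\mathrm{id}\colon X\to(X,\tau)$. First I would record the elementary reductions. Applying the defining property to the constant sequence $S_n=\{x\}$ shows that an $\as$-network is in particular a network, so $X$ is cosmic, hence regular and Lindel\"of, hence normal, perfectly normal and hereditarily Lindel\"of. Fix a countable $\as$-network $\mathcal N=\{N_k\}_{k\in\w}$. Using regularity I would replace it by the countable family of closed sets $C_k:=\overline{N_k}$ and check that $\{C_k\}_{k\in\w}$ is still an $\as$-network: given $U$ and a sequence of closed sets accumulating at $x\in U$, choose an open $U'$ with $x\in U'\subseteq\overline{U'}\subseteq U$, apply the $\as$-property of $\mathcal N$ inside $U'$ to get $N_k\subseteq U'$ meeting infinitely many members, and note $C_k=\overline{N_k}\subseteq U$ meets the same members. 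So I may assume the $\as$-network consists of closed sets.

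Next I would build $\tau$. For each pair $(k,l)$ with $C_k\cap C_l=\emptyset$, normality provides a continuous $f_{k,l}\colon X\to[0,1]$ with $f_{k,l}|C_k\equiv 0$ and $f_{k,l}|C_l\equiv 1$; let $\tau$ be the initial topology generated by the countable family $\{f_{k,l}\}$ (equivalently, the topology of the diagonal embedding $X\to[0,1]^\w$). Then $\tau$ is a weaker separable metrizable topology, and it is Hausdorff: given $x\neq y$, the network yields $C_k$ with $x\in C_k$, $y\notin C_k$, then some $C_l\ni y$ disjoint from $C_k$, so $f_{k,l}$ separates $x,y$. The feature I would extract is that each $C_k$ is $\tau$-closed: if $y\notin C_k$, the same argument gives $C_l\ni y$ disjoint from $C_k$, and $\{f_{k,l}>\tfrac12\}$ is a $\tau$-open neighbourhood of $y$ missing $C_k$, whence $\overline{C_k}^{\,\tau}=C_k$. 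In particular every finite union $\bigcup_{i\le m}C_{k_i}$ is $\tau$-closed.

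I would then reduce cometrizability to a single local statement. Fix $x\in U$ with $U$ open, enumerate $\{k:C_k\subseteq U\}=\{k_1,k_2,\dots\}$, and set $G_m:=\bigcup_{i\le m}C_{k_i}$. Since $\{C_k\}$ is a closed network, $U=\bigcup_m G_m$, and each $G_m$ is $\tau$-closed and contained in $U$. It suffices to show that $x\in\mathrm{int}_X G_m$ for some $m$: then $V:=\mathrm{int}_X G_m$ is an $X$-open neighbourhood of $x$ with $\overline{V}^{\,\tau}\subseteq\overline{G_m}^{\,\tau}=G_m\subseteq U$, so $\overline{V}^{\,\tau}$ is a $\tau$-closed neighbourhood of $x$ inside $U$.

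The main obstacle — and the only place where the full strength of the $\as$-network (rather than a mere network, which gives only cosmicity and would not force cometrizability, as Example~\ref{ex} shows under $\w_1=\mathfrak c$) is needed — is this interior-covering property. I would argue by contradiction: if $x\notin\mathrm{int}_X G_m$ for all $m$, then $x\in\overline{X\setminus G_m}$ for every $m$, where the open sets $A_m:=X\setminus G_m$ decrease with $\bigcap_m A_m=X\setminus U$. The aim is to manufacture a sequence $(S_n)$ of closed sets accumulating at $x$ with $S_n\subseteq A_{l(n)}$ and $l(n)\to\infty$; this contradicts the $\as$-property, since any $C_k\subseteq U$ equals some $C_{k_{i_0}}\subseteq G_m=X\setminus A_m$ for all $m\ge i_0$, hence $C_k\cap S_n=\emptyset$ for all large $n$, so no $C_k\subseteq U$ meets infinitely many $S_n$. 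The delicate point is that $X$ need not be first countable at $x$, so a naive diagonal choice $p_n\in A_n$ need not accumulate at $x$; I expect to exploit the two structural facts already in hand — that a countable network makes $X$ countably tight, and that perfect normality writes each $A_m$ as an increasing union of closed sets — to select closed $S_n\subseteq A_{l(n)}$ with levels tending to infinity whose tails still have $x$ in their closure. Verifying that such a selection simultaneously accumulates at $x$ and escapes to arbitrarily deep levels is the technical heart of the lemma.
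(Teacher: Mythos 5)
Your preliminary steps are fine (an $\as$-network is a network; the closures of its members form a closed $\as$-network; your Urysohn-function topology $\tau$ is a weaker separable metrizable topology in which each $C_k$ is closed), but the argument collapses exactly at the step you defer: the claim that $x\in\mathrm{int}_X G_m$ for some $m$. This claim is not merely unproven -- it is false in general, so no construction of ``escaping'' accumulating sequences can save it. Indeed, if $\mathcal N$ is \emph{any} countable family and a point $x$ has the property that every open $U\ni x$ contains a finite union $G$ of members of $\mathcal N$ with $x\in\mathrm{int}_X G$, then the countable family $\{\mathrm{int}_X\bigcup\mathcal F:\mathcal F\subseteq\mathcal N\mbox{ finite}\}$ contains a neighborhood base at $x$, so $X$ is first countable at $x$. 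Thus your interior-covering property would force every $\as$-cosmic space to be first countable. But the Fr\'echet--Urysohn fan $S_\omega$ (countably many convergent sequences with their limit points identified to a single point $*$) is a Fr\'echet--Urysohn normal $\aleph_0$-space, hence $\cs$-Ascoli, and therefore $\as$-cosmic by Theorem~\ref{t:New}; yet $S_\omega$ has uncountable character (namely $\mathfrak d$) at $*$. Concretely, the countable family consisting of all finite subsets of $S_\omega$ together with all sets of the form $\{*\}\cup T$, where $T$ is a finite union of tails of legs, is a closed $\as$-network for $S_\omega$, while no finite union of its members contained in a given neighborhood of $*$ can contain a neighborhood of $*$ (a neighborhood must meet a tail of \emph{every} leg). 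So the configuration ``$x\notin\mathrm{int}_X G_m$ for all $m$'' genuinely occurs in $\as$-cosmic spaces, and the contradiction you aim for does not exist; your scheme can only ever prove the lemma at points of countable character.

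This is also why the paper's proof looks so different: it never tries to locate $\tau$-closed neighborhoods of $x$ of the combinatorial form ``finite union of network elements.'' Instead it uses the $\as$-network $\mathcal N$ to build a countable zero-dimensional topology $\tau$ on $C(X,\IR)$, forms a suitable separable family $\K$ of compact subsets of $C_\tau(X)$, and embeds $X$ into the function space $C_\K(C_\tau(X))$ via the Dirac map $\delta$; that function space is cometrizable by Theorem~\ref{t:Ck}, and cometrizability passes to subspaces by Proposition~\ref{p:sub}. In that argument the $\as$-property is invoked only to prove continuity of $\delta$: the accumulating sequence of closed sets is $\bigl(X\setminus f_n^{-1}(U)\bigr)_{n\in\w}$ for a $\tau$-convergent sequence of functions $(f_n)$, and its accumulation at $x$ is handed to you for free by the assumption that $\delta^{-1}([K,U])$ is not a neighborhood of $x$ -- no diagonal selection with levels tending to infinity is ever required. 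If you want a direct construction of the witnessing metric topology on $X$ itself, you would need closed neighborhoods of a completely different shape; the function-space detour is the paper's way of manufacturing them.
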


\begin{proof}  Fix a countable $\mathsf{as}$-network $\mathcal N$ for the $\mathsf{as}$-cosmic space $X$. Let $\mathcal B$ be a countable base of the topology of the real line $\IR$ such that $\mathcal B$ is closed under finite unions. Let $\tau$ be the zero-dimensional Hausdorff topology on the function space $C(X,\IR)$, generated by the countable subbase consisting of the sets $$[N,B]:=\{f\in C(X,\IR):f(N)\subseteq B\}\mbox{ and }C(X,\IR)\setminus[N,B]$$
where $N\in\mathcal N$ and $B\in\mathcal B$. Denote by $C_\tau(X)$ the space $C(X,\IR)$ endowed with the topology $\tau$.

Let $D$ be any countable dense subset of the second-countable space $C_\tau(X)$. Let $\mathcal K$ be the family of compact subsets $K\subseteq C_\tau(X)$ such that either $K$ is finite or $K$ has a unique non-isolated point and $K\cap D$ is dense in $K$. 

It is easy to see that $\K$ is a separable family of compact sets in $C_\tau(X)$.
By Theorem~\ref{t:Ck}, the function space $C_\K(C_\tau(X))$ is cometrizable.

Consider the canonical map $\delta:X\to C_\K(C_\tau(X))$ assigning to each point $x\in X$ the Dirac measure $\delta_x:f\mapsto f(x)$. Let us show that the Dirac measure $\delta_x$ is a continuous function on $C_\tau(X)$. Given any function $f\in C_\tau(X)$ and an open neighborhood $U\in\mathcal B$ of $\delta_x(f)=f(x)$, find a set $N\in\mathcal N$ such that $x\in N\subseteq f^{-1}(U)$. Then $[N,U]\in\tau$ is a  neighborhood of $f$ such that $\delta_x([N,U])\subseteq U$. This means that the functional $\delta_x$ is continuous and the map $\delta:X\to C_{\K}(C_\tau(X))$ is well-defined.

Let us show that the inverse map $\delta^{-1}:\delta(X)\to X$ is continuous. Take any point $x\in X$ and fix any neighborhood $O_x\subseteq X$ of $x$. The space $X$, being regular and Lindel\"of (because of cosmic), is normal. Then we can find a continuous function $f:X\to [0,1]$ such that $f(x)=1$ and $f(X\setminus O_x)\subset\{0\}$. Since the family $\K$ contains all singletons in the space $C_\tau(X)$, the singleton $\{f\}$ belongs to the family $\K$. Consider the open set $U=\{r\in\IR:r>\frac12\}$ and the open set $[\{f\},U]\subseteq C_\K(C_\tau(X))$, which contains the functional $\delta_x$ as $\delta_x(f)=f(x)=1\in U$. Since $$\delta^{-1}([\{f\};U])=\{z\in X:\delta_z\in [\{f\};U]\}=\{z\in X:\delta_z(f)\in U\}=\{z\in X:f(x)>\tfrac12\}\subseteq O_x,$$the function $\delta^{-1}:\delta(X)\to X$ is continuous at the point $\delta_x\in C_\K(C_\tau(X))$.

It remains to prove that the map $\delta:X\to C_\K(C_\tau(X))$ is continuous. Since the base $\mathcal B$ is closed under finite unions, it suffices to prove that for every compact set $K\in\K$ and every basic open  set $U\in\mathcal B$ the preimage $\delta^{-1}([K,U])$ is open in $X$. By the definition of the family $\K$, the compact set $K$ is either finite or has a unique non-isolated point and $K\cap D$ is dense in $D$.

First assume that $K$ is finite. Then $$\delta^{-1}([K,U])=\{x\in X:\delta_x\in [K,U]\}=\bigcap_{f\in K}\{x\in X:\delta_x(f)\in U\}=\bigcap_{f\in K}f^{-1}(U)$$ is open in $U$ by the continuity of the functions $f\in K$.

Now assume that $K$ has a unique non-isolated point  and the set $K\cap D$ is dense in $K$. Then $K=\{f_\infty\}\cup\{f_n\}_{n\in\w}$ for some sequence of functions $\{f_n\}_{n\in\w}\subseteq D$ that converge to a function $f_\infty$ in the space $C_\tau(X)$. Assuming that the set $\delta^{-1}([K,U])$ is not open in $X$, we conclude that $\delta^{-1}([K,U])=\bigcap_{f\in K}f^{-1}(U)$ is not a neighborhood of some point $x\in \delta^{-1}[K,U])$. Then each neighborhood $O_x$ of $x$ intersects infinitely many closed sets $X\setminus f_n^{-1}(U)$. Since $\mathcal N$ is an $\mathsf{as}$-network, there exists a set $N\in\mathcal N$ such that $N\subseteq f_\infty^{-1}(U)$ and $N$ intersects infinitely many closed sets $X\setminus f_n^{-1}(U)$. Since $[N,U]$ is a basic open neighborhood of the function $f_\infty$ in the space $C_\tau(X)$, there exists $m\in\w$ such that $f_n\in [N,U]$ for all $n\ge m$. Then $N\subseteq f_n^{-1}(U)$ for all $n\ge m$ and $N$ cannot intersect infinitely many sets $X\setminus f_n^{-1}(U)$. This contradiction completes the proof of the continuity of the map $\delta:X\to C_\K(C_\tau(X))$.

Therefore, $\delta$ is a topological embedding of the space $X$ into the cometrizable space $C_\K(C_\tau(X))$, which implies that the space $X$ is cometrizable.
\end{proof}

 \section{The construction of the space from Example~\ref{ex}}\label{s:ex}

In this section we construct a regular topology $\tau$ of weight $\w_1$ on the set of rational numbers $\IQ$ such that the topological space $(\IQ,\tau)$ is not cometrizable (and contains no infinite compact subsets under CH). 

Let $\tau_0$ be the standard metrizable topology on the set $\IQ$ of rational numbers. Let $\mathfrak S$ be the set $\mathfrak S$ of all injective functions $s:\w\to\IQ\setminus\{0\}$ such that the sequence $(s(n))_{n\in\w}$ converges in the topological space $(\IQ,\tau_0)$.

Since the set $\mathfrak S$ has cardinality $|\mathfrak S|=\mathfrak c$, it can be written as $\mathfrak S=\{s_{\alpha+1}\}_{\alpha\in\mathfrak c}$. For each limit ordinal $\alpha<\w_1$ let $s_\alpha:\w\to\IQ$ be the map defined by $s_\alpha(n)=n+1$ for $n\in\w$.

 By transfinite induction of length $\w_1$, for every countable ordinal $\alpha$ we select a regular second countable topology $\tau_\alpha$ on $\IQ$ and a neighborhood $U_\alpha\in\tau_\alpha$ of zero such that the following conditions are satisfied for every $\alpha<\w_1$:
\begin{itemize}
\item[$(1_\alpha)$] $\bigcup_{\beta<\alpha}\tau_\beta\subset\tau_\alpha$;
\item[$(2_\alpha)$] the topological space $(\IQ,\tau_\alpha)$ has no isolated points;
\item[$(3_\alpha)$] for every $\beta<\alpha$ and every neighborhood $V\in\tau_\alpha$ of zero the $\tau_\beta$-closure of $V$ is not contained in $U_{\beta+1}$;
\item[$(4_\alpha)$] the sequence $(s_\alpha(n))_{n\in\w}$ is not convergent in the topological space $(\IQ,\tau_\alpha)$.
\end{itemize}
Assume that for some non-zero ordinal $\alpha$ and all ordinals $\beta<\alpha$ we have constructed topologies $\tau_\beta$ and open sets $U_\beta\in\tau_\beta$ satisfying the conditions $(1_\beta)$--$(4_\beta)$. 

If $\alpha$ is a limit ordinal, let $\tau_\alpha$ be the topology on $\IQ$, generated by the base $\bigcup_{\beta<\alpha}\tau_\beta$, and observe that the conditions $(1_\alpha)$--$(4_\alpha)$ are satisfied. The condition $(4_\alpha)$ is satisfied since $s_\alpha(n)=n+1$ for $n\in\w$.

Now assume that $\alpha$ is a successor ordinal and hence $\alpha=\gamma+1$ for some ordinal $\gamma$. Let $\xi:\w\to\gamma=[0,\gamma)$ be a function such that for every $\beta<\gamma$ the preimage $\xi^{-1}(\beta)=\{n\in\w:\xi(n)=\beta\}$ is infinite.

Let $\{V_n\}_{n\in\w}\subset\tau_\gamma$ be a neighborhood base at zero such that $V_0=\IQ$ and $$V_{n+1}\subseteq V_{n}\cap[2^{-n-1},2^{n+1}]\cap U_{\xi(n)+1}$$ for all $n\in\w$. Since the metrizable countable space $(\IQ,\tau_\gamma)$ is zero-dimensional, we can additionally assume that each $V_n$ is closed-and-open in the topology $\tau_\gamma$. If the limit point $x_0$ of the convergent sequence $(s_{\gamma+1}(n))_{n\in\w}$ is not equal zero, then we shall additionally assume that the $\tau_0$-closure of the set $V_1$ is disjoint with the compact set $\{x_0\}\cup\{s_\alpha(n)\}_{n\in\w}$ (which does not contain zero).

If  $x_0\ne0$, then put $x_{0,k}=s_\alpha(k)$ for all $k\in\w$ and observe that the sequence $(x_{0,k})_{k\in\w}$ converges to the point $x_0$ in the topology $\tau_0$.

If $x_0=0$ and the sequence $(s_\alpha(n))_{n\in\w}$ converges to zero in the topology $\tau_\gamma$, then we can choose an increasing number sequence $(n_k)_{k\in\w}$ such that the point $x_{0,k}:=s_\alpha(n_k)$ belongs to the neighborhood $V_k\in\tau_\gamma$ of zero. In this case the sequence $(x_{0,k})_{k\in\w}$ converges to zero in the topology $\tau_\gamma$.

If $x_0=0$ but the sequence $(s_\alpha(n))_{n\in\w}$ does not converges to zero in the topology $\tau_\gamma$, then for every $k\in\w$ choose any point $x_{0,k}\in V_k$ and observe that the sequence $(x_{0,k})_{k\in\w}$ converges to zero in the topology $\tau_\gamma$. 

By the condition $(3_\gamma)$, for every $n\in\IN$ the $\tau_{\xi(n)}$-closure of $V_n$ is not contained in $U_{\xi(n)+1}$. So, we can find a sequence $\{x_{n,k}\}_{k\in\w}\subseteq V_n$ of pairwise distinct points that converge to some point $x_{n}\in X\setminus U_{\xi(n)+1}$ in the topology $\tau_{\xi(n)}$. Since $\tau_0\subset\tau_{\xi(n)+1}$, the sequence $(x_{n,k})_{k\in\w}$ converges to $x_n$ in the Euclidean topology $\tau_0$. Since $V_n\subset[2^{-n},2^{n}]$, the point $x_n$ belongs to the closed interval $[2^{-n},2^n]$ and hence the sequence $(x_n)_{n\in\w}$ converges to zero in the topology $\tau_0$. Replacing each sequence $(x_{n,k})_{k\in\w}$ by a suitable subsequence, we can assume that the points $x_{n,k}$, $(n,k)\in\IN\times\w$, are pairwise distinct and do not belong to the compact set $\{x_0\}\cup\{x_{0,k}\}_{k\in\w}$. Since $\{x_n\}\cup\{x_{n,k}\}_{n,k\in\w}\subseteq [2^{-n},2^n]$ for all $n\in\IN$, the subspace $\{x_{n,k}\}_{n,k\in\w}$ is closed and discrete in the (zero-dimensional) subspace $\IQ\setminus(\{0\}\cup\{x_n\}_{n\in\w})$ of $(\IQ,\tau_0)$.
Consequently, for each $n,k\in\w$ we can find a closed-and-open neighborhood $O_{n,k}\in\tau_0$ of the point $x_{n,k}$ such that the sets $O_{n,k}$, $n,k\in\w$, are pairwise disjoint and also are disjoint with the compact set $\{0\}\cup\{x_n\}_{n\in\w}$. Since the space $(\IQ,\tau_\gamma)$ has no isolated points, for every $n\in\IN$ and $k\in\w$ we can choose an open neighborhood $V_{n,k}\in\tau_\gamma$ of the point $x_{n,k}$ such that $V_{n,k}\subseteq O_{n,k}\cap V_n$, $V_{n,k}$ is not closed in the topology $\tau_\gamma$ and the space $O_{n,k}\setminus V_{n,k}$ has no isolated points in the topology $\tau_\gamma$.

Let $$U_\alpha:=\{0\}\cup\bigcup_{k\in\w}O_{0,2k}\cup\bigcup_{n\in\IN}\bigcup_{k\in\w}V_{n,k}$$ and let $\tau_\alpha$ be the topology generated by the subbase $\tau_\gamma\cup \{U_\alpha,\IQ\setminus U_\alpha\}$.
It is easy to check that for the topology $\tau_\alpha$ and the neighborhood $U_\alpha\in\tau_\alpha$ of zero the conditions $(1_\alpha)$--$(3_\alpha)$ are satisfied.

Let us show that the condition $(4_\alpha)$ is satisfied, too. To derive a contradiction, assume that the sequence $(s_\alpha(k))_{k\in\w}$ converges in the topological space $(X,\tau_\alpha)$. Since $\tau_\gamma\subseteq\tau_\alpha$, it converges in the space $(X,\tau_\gamma)$ and hence $(x_{0,k})_{k\in\w}$ is a subsequence of the sequence $(s_\alpha(n))_{n\in\w}$ by the definition of the points $x_{0,k}$, $k\in\w$. Since the closed-and-open set $U_\alpha\in\tau_\alpha$ contains the points $x_{0,2k}$, $k\in\w$, and does not contains the points $x_{0,2k+1}$, $k\in\w$, the sequence $(x_{0,k})_{k\in\w}$ is not convergent in the topology $\tau_\alpha$ and then the sequence $\{s_\alpha(n)\}_{n\in\w}\supset\{x_{0,k}\}_{k\in\w}$ also cannot be convergent in the topology $\tau_\alpha$. Therefore, $(4_\alpha)$ is satisfied.
\smallskip

After completing the inductive construction, consider the topology $\tau=\bigcup_{\alpha\in\w_1}\tau_\alpha$ on $\IQ$ (this is a topology as $\IQ$ is countable). We claim that the space $(\IQ,\tau)$ is not cometrizable. In the opposite case we could find a metrizable topology $\sigma\subseteq\tau$ such that for every neighborhood $U\in\tau$ of zero there exists a neighborhood $V\in\tau$ of zero whose closure in the topology $\sigma$ is contained in $U$. Since the topology $\sigma\subseteq\tau$ has a countable base, there exists a countable ordinal $\beta$ such that $\sigma\subseteq\tau_\beta$. By the choice of the topology $\sigma$, for the neighborhood $U_{\beta+1}\in\tau_{\beta+1}\subseteq\tau$, there exists a neighborhood $V\in\tau$ whose $\sigma$-closure in contained in $U_{\beta+1}$. Since $V\in\tau=\bigcup_{\alpha\in\w_1}\tau_\alpha$, there exists a countable ordinal $\alpha>\beta$ with $V\in\tau_\alpha$. The inductive condition $(3_\alpha)$ ensures that the $\tau_\beta$-closure of $V$ is not contained in $U_{\beta+1}$. Since $\sigma\subseteq\tau_\beta$, the $\sigma$-closure of $V$ contains the $\tau_\beta$-closure of $V$ and hence also is not contained in $U_{\beta+1}$. This contradiction shows that the space $(\IQ,\tau)$ is not cometrizable.
\smallskip

Finally, assuming that $\w_1=\mathfrak c$, we shall prove that the space $(\IQ,\tau)$ contains no infinite compact subsets. In the opposite case, we could find an injective function $s:\w\to \IQ\setminus\{0\}$ such that the sequence $(s(n))_{n\in\w}$ is convergent in the topological space $(\IQ,\tau)$. Since $\tau_0\subseteq\tau$, this sequence remains convergent in the space $(\IQ,\tau_0)$ and hence $s\in\mathfrak S=\{s_{\alpha+1}\}_{\alpha\in\w_1}$. So, we can find a successor ordinal $\alpha<\w_1$ such that $s=s_\alpha$. In this case the inductive condition $(4_\alpha)$ ensures that the sequence $(s_\alpha(n))_{n\in\w}$ is not convergent in the topology $\tau_\alpha$ and then the sequence $(s(n))_{n\in\w}=(s_\alpha(n))_{n\in\w}$ cannot be convergent in the topology $\tau\supset \tau_\alpha$, which contradicts the choice of $s$. Since each compact subset of the space $X$ is finite, the countable family $\mathcal N=\{\{x\}:x\in X\}$ is a $k$-network for $X$, which means that $X$ is an $\aleph_0$-space and hence is $\cs$-cosmic.

If $\w_1<\mathfrak p$, then the space $X$ is Fr\'echet-Urysohn by the definition of the cardinal $\mathfrak p$. Assuming that $X$ is an $\aleph_0$-space and applying \cite[11.4]{Grue}, we would conclude that that $X$ is stratifiable and, by Theorem~\ref{t:S}, $X$ is cometrizable, which is not the case.




\section{Acknowledgement}

The authors express their sincere thanks to Alex Ravsky for careful reading the manuscript and many valuable discussions (especially on the construction of Example~\ref{ex}), to Jerzy K\c akol whose inspiring questions resulted in the proof of Theorem~\ref{t:Ck}, and to Paul Gartside for the information on his joint paper \cite{GR} with Reznichenko.


\end{document}